\documentclass[11pt,reqno]{amsart}
\usepackage[a4paper]{geometry}

\usepackage{amssymb,latexsym}
\usepackage{graphicx}
\usepackage{url}		
\usepackage{todonotes}
\usepackage{hyperref}
\usepackage{multicol}
\usepackage{amsmath}
\usepackage{mathtools}
\usepackage{rotating}
\usepackage{amsmath}
\usepackage{enumitem}
\usepackage{textcase}
\usepackage{mathdots}
\usepackage{nicematrix}
\usepackage{booktabs}
\usepackage{hyperref}

\usepackage[T2A]{fontenc}
\usepackage{soul}
\setlist{itemsep=8pt}
\usepackage{empheq}
\usepackage{tikz}
\usetikzlibrary{matrix,backgrounds}
\usetikzlibrary{decorations.pathreplacing,calligraphy}
\newlength{\mydrawlinewidth}
\NewDocumentCommand{\markA}{ O{red} m m }{%
    \draw[rounded corners=3mm,opacity=.5, line width=2pt,#1,fill=#1!20] 
    ([xshift=5pt,yshift=-5pt]yannis-#2.north west) 
    rectangle 
    ([xshift=-5pt,yshift=5pt]yannis-#3.south east);
}
\NewDocumentCommand{\markB}{ O{red} m m }{%
    \draw[rounded corners=3mm,opacity=.5,line width=1pt,#1,fill=#1!50]
       ([yshift=2pt]yannis-#2.north) 
    -- ([xshift=2pt]yannis-#3.east)
    -- ([yshift=-2pt]yannis-#3.south)
    -- ([xshift=-2pt]yannis-#2.west)
    -- cycle;
}

\NewDocumentCommand{\markC}{ O{red} m m }{%
    \draw[rounded corners=3mm,opacity=.5,line width=1pt,#1,fill=#1!50]
       ([yshift=2pt]yannis-#3.north) 
    -- ([xshift=2pt]yannis-#2.west)
    -- ([yshift=-2pt]yannis-#2.south)
    -- ([xshift=-2pt]yannis-#3.east)
    -- cycle;
}

\calclayout

\makeatletter
\newcommand{\monthyear}[1]{%
  \def\@monthyear{\uppercase{#1}}}
\newcommand{\volnumber}[1]{%
  \def\@volnumber{\uppercase{#1}}}
\AtBeginDocument{%
\def\ps@plain{\ps@empty
  \def\@oddfoot{\@monthyear \hfil \thepage}%
  \def\@evenfoot{\thepage \hfil \@volnumber}}
\def\ps@firstpage{\ps@plain}
\def\ps@headings{\ps@empty
  \def\@evenhead{%
    \setTrue{runhead}%
    \def\thanks{\protect\thanks@warning}%
    \uppercase{AOM}\hfil}%
  \def\@oddhead{%
    \setTrue{runhead}%
    \def\thanks{\protect\thanks@warning}%
    \hfill\MakeTextUppercase{\othertitle}}%
  \let\@mkboth\markboth
  \def\@evenfoot{%
    \thepage \hfil \@volnumber}%
  \def\@oddfoot{%
    \@monthyear \hfil \thepage}%
  }%
\footskip=25pt
\pagestyle{headings}%
}
\makeatother

\theoremstyle{plain}
\numberwithin{equation}{section}
\newtheorem{thm}{Theorem}[section]
\newtheorem{theorem}[thm]{Theorem}
\newtheorem{lemma}[thm]{Lemma}

\newtheorem{definition}[thm]{Definition}
\newtheorem{proposition}[thm]{Proposition}
\newtheorem{formula}[thm]{Formula}
\newtheorem{notation}[thm]{Notation}
\newtheorem{corollary}[thm]{Corollary}
\newtheorem{remark}[thm]{Remark}

\begin{document}
\monthyear{July 2026}
\volnumber{Volume, Number}
\setcounter{page}{1}

\title{Cassini's identity for $k$-bonacci Numbers}
\def\othertitle{Cassini $k$-bonacci}
\author{Harold R. Parks}
\address{Oregon State University, Corvallis, Oregon, USA.}
\email{hal.parks@oregonstate.edu}
\author{Dean. C. Wills}
\address{Thayer School of Engineering at Dartmouth College, Hanover, NH, USA}
\email{dean.c.wills.th@dartmouth.edu}

\begin{abstract}
Efforts have been made to extend Cassini's identity (also known as Simson's identity) to the $k$-step or $k$-bonacci numbers for decades. These efforts have lacked both completeness of result and simplicity of proof, and this question remains open and relevant. In this note, we offer a definitive solution as well as the generalization of both Catalan's and Vajda's identities. 
\end{abstract}
\maketitle

\vspace{10pt}

\noindent{\textbf{Keywords:}{$k$-bonacci, Cassini, Catalan, Vajda}} \\
\noindent{\textbf{2020 Mathematics Subject Classification:}{11B39, 11B0F}}

%
%

\section{Introduction}

Our purpose in this article is to generalize the identities of
Cassini, Catalan, and Vajda from their original setting
in the Fibonacci numbers to 
the larger realm of the $k$-bonacci numbers.
Before we begin that work, we will briefly review for the 
reader what the identities of
Cassini, Catalan, and Vajda tell us.

Cassini's identity for the Fibonacci numbers $F_n$ is
\begin{equation} \label{eq:cassini}
F_{n}^{2}-F_{n-1}F_{n+1}=(-1)^{n-1}.
\end{equation}
This identity was first published by  
Jean-Dominque Cassini\footnote{Cassini's given name was 
Giovanni Domenico, but he changed it when he moved to France. 
Jean-Dominque Cassini, his son, Jacques, 
his grandson, C\'esar-Fran\c{c}ois, and his
great-grandson, Jean-Domenique, 
all noted astronomers, are often referred to as 
Cassini I, II, III, and IV.} 
in 1680 (see \cite{bhlpage3208626}).
The identity was independently rediscovered and published by Robert Simson in 1753 (see \cite{simson1753}), so \eqref{eq:cassini} is also known as Simson's identity.

In 1879, 
Eug\`ene Catalan generalized Cassini's identity to
\begin{equation} \label{eq:catalan}
F_{n}^{2}-F_{n-r}F_{n+r}=(-1)^{n-r} F_{r}^2,
\end{equation}
though \eqref{eq:catalan}  was not published until 1886
when it appeared in \cite{catalan}. 

The twentieth century saw a further generalization 
of Cassini's identity to
\begin{equation} \label{eq:vajda}
F_{n}F_{n+p+q}-F_{n+p}F_{n+q}=(-1)^{n-1}F_{p}F_{q}.
\end{equation}
While \eqref{eq:vajda} was published  in 1901
by Alberto Tagiuri
in  \cite{tagiuri1901} and by 
Arthur Danese in 1960 
in the {\it American Mathematical Monthly}
as an elementary problem  
(see \cite{danese1960}), its inclusion 
as  equation (20a) on
page 28 of
Steven Vajda's 1989 book \cite{vajda1989}, 
has led to \eqref{eq:vajda} 
being known as Vajda's identity.\footnote{Vajda does give  credit to
Danese.} 
To see that \eqref{eq:vajda}
is a generalization of \eqref{eq:catalan}, we set $p=r$ and
$q=-r$ in \eqref{eq:vajda}, and then we use 
the fact that $F_{-r} = (-1)^{1-r} F_r$.

\medskip
For our generalizations of \eqref{eq:cassini}, 
\eqref{eq:catalan}, and \eqref{eq:vajda},
we simply replace each Fibonacci number 
$F_\ell$ on the left-hand side
by the corresponding $k$-bonacci number
$F^{(k)}_\ell$. The question then, and the problem we address, is  

\begin{center}
    ``What goes on the right-hand side of each equation?''
\end{center}

For $k>2$ the correct right-hand side cannot be obtained by the  simple technique of replacing each Fibonacci number
$F_\ell$ by the corresponding 
$k$-bonacci number $F^{(k)}_\ell$---the result
of that process would be false.
Instead, we shall obtain the correct right-hand sides by using
combinations of generalized Vandermonde determinants 
based on the roots
of the characteristic polynomial for the $k$-bonacci recursion.

\medskip
It is straightforward to prove Cassini's identity \eqref{eq:cassini} 
using Binet's formula for the Fibonnaci numbers, so
motivation for our approach is provided by the existence of a similar Binet-type formula for the \mbox{$k$-bonacci} numbers
(see \cite{kilicc2006generalized}
or \cite{generalizedBinet2023Parks}).
That Binet formula for the \mbox{$k$-bonacci} numbers
involves a ratio of $k\times k$ determinants that
can reasonably be called generalized Vandermonde determinants. Our
main tool for obtaining our 
generalizations of \eqref{eq:cassini}, 
\eqref{eq:catalan}, and \eqref{eq:vajda}
will also be  ratios of such determinants.
So inspired by E. R. Heineman's notation in
\cite{heineman1929generalized}, we find it helpful
to introduce the following:
\begin{notation}\rm
Let $\ell$ be a positive integer and let $\phi_i$ be $\ell$ elements of a commutative ring $\mathbf{R}$. Then we set
\begin{equation}\label{eq:heineman}
\| \ t_1,\ t_2,\ \dots, \ t_{\ell}\ \|
=
\begin{vmatrix}
\phi_1^{t_1} & \phi_1^{t_2} & \cdots & \phi_1^{t_{\ell-1}} & \phi_1^{t_{\ell}}\\[0.5ex]
\phi_2^{t_1} & \phi_2^{t_2} & \cdots & \phi_2^{t_{\ell-1}}& \phi_2^{t_{\ell}}\\[0.5ex]
\phi_3^{t_1} & \phi_3^{t_2} & \cdots & \phi_3^{t_{\ell-1}}& \phi_3^{t_{\ell}}\\
\vdots & \vdots  & \ddots & \vdots & \vdots \\
\phi_\ell^{t_1} & \phi_\ell^{t_2} & \cdots & \phi_\ell^{t_{\ell-1}} & \phi_\ell^{t_{\ell}}
\end{vmatrix}
\end{equation}
where the $t_i$ are integers. The  $\phi_i$ do not
appear on the left-hand side of \eqref{eq:heineman} and so 
must be understood from the context.
\end{notation}

Using the above notation, with $\ell=k$ and the  $\phi_i$ the roots over $\mathbb{C}$ of the characteristic polynomial of the 
$k$-bonacci recursion,
\[
x^k-x^{k-1}-\cdots-1=0
\,,
\] 
the Binet formula of \cite{generalizedBinet2023Parks} for 
the \mbox{$k$-bonacci} numbers is
\begin{equation}\label{eq.binet}
F_n^{(k)}=
\frac{
\|\ 0, \ 1,\ 2,\ \dots\ k-2,\ \ n\ \|  }
{\|\ 0, \ 1,\ 2,\ \dots\ k-2,\ \ k-1\ \|}
\,.
\end{equation}	
Our results here will be expressed in terms of
\begin{equation}\label{eq.D}
F_{m,n}^{(k)}=
\frac{
\|\ 0, \ 1,\ 2,\ \dots\ k-3,\ m,\ n\ \|  }
{\|\ 0, \ 1,\ 2,\ \dots\ k-3,\ k-2,\ k-1\ \|}
\,.
\end{equation}
Now,  
$F^{(k)}_{m,\,n}$ 
would be  difficult to 
compute directly using   \eqref{eq.D}, especially when $k$ is large.
Fortunately there are other ways to evaluate $F^{(k)}_{m,\,n}$. 
Additionally, $F^{(k)}_{m,\,n}$ satisfies the $k$-bonacci recursion in
both $m$ and $n$.
We will also see from Theorem~\ref{th:vajda1} and Theorem~\ref{thm.reduce.to.k} that
\[
F^{(k)}_{m,n}  =   F^{(k)}_{n}F^{(k)}_{m+1} -F^{(k)}_{n+1}F^{(k)}_{m} 
= F^{(k)}_{n-k}F^{(k)}_{m} -F^{(k)}_{n}F^{(k)}_{m-k} \,,
\]
thus expressing $F^{(k)}_{m,\,n}$ in terms of $k$-bonacci numbers.

The generalization of Cassini's identity is the following:
For all $k\geq2$, it holds that
\[
\left(F^{(k)}_{n}\right)^{2}-
F^{(k)}_{n-1}\,F^{(k)}_{n+1}=F_{ n-1,\, n}^{(k)}\,.
\]
When $k=3$, we get the following  nice result:
\[
    \left(F^{(3)}_{n}\right)^{2}-F^{(3)}_{n-1}\,F^{(3)}_{n+1}=F^{(3)}_{1-n} \,,
\]
and when $k=2$, our generalization reduces to Cassini's original identity.

The most general of our results is the generalization
of Vajda's identity given in Theorem~\ref{th:vajda} where
we  show for $p > 0$
\begin{equation} 
F^{(k)}_{n}F^{(k)}_{n+p+q}-F^{(k)}_{n+p}F^{(k)}_{n+q} = \sum_{i=0}^{p-1}F^{(k)}_{n+q+i,\, n+p-1-i} \,,
\label{eq:vaj}
\end{equation}

\medskip
While it is far from apparent, \eqref{eq:vaj} does simplify 
to Vajda's identity when $k=2$. 
Unfortunately, for $k\geq 3$,the right-hand side of \eqref{eq:vaj},
as it stands, is no longer a simple expression like \eqref{eq:vajda}. 
Nonetheless equation \eqref{eq:vaj}
answers to the question, ``What goes on the right-hand side?''
in the context of our functions $F^{(k)}_{m,\,n}$.


%
%





\section{Preliminaries}
The generalization of Vajda's identity is a consequence of an identity that holds for the particular Vandermonde matrices given in the next definition.
\begin{definition}\label{def.vanders}\rm
Let $\phi_i$ be $k$ elements of a commutative ring  
$\mathbf{R}$.
\begin{enumerate}

\item 
Set
\[
V^{(k)}= V^{(k)}|_{\phi_i} =  \|\ 0,\ 1,\ \dots,\ k-1\ \|
\,.
\]
\item
For $u\in\{ 1,2,\dots,k \}$, we let $V^{(k)}_{\widehat u}$ be the 
$(k-1)\times(k-1)$ determinant obtained from $V^{(k)}$
by omitting row $u$ and  column $k$.

\item
For $u,v\in\{ 1,2,\dots,k \}$ with $u < v $, we let 
$V^{(k)}_{\widehat u\, \widehat v}$ be the 
$(k-2)\times(k-2)$ determinant obtained from $V^{(k)}$
by omitting rows $u$ and $v$ and  
columns $k-1$ and $k$.

\end{enumerate}

\end{definition}

\medskip
All three of $V^{(k)}$, $V^{(k)}_{\widehat u}$, 
and $V^{(k)}_{\widehat u\,\widehat v}$ 
in Definition~\ref{def.vanders} are Vandermonde 
determinants, so the value of each is the 
product of the differences, higher row 
number minus lower row number, of the 
entries in the second column of that particular determinant, when the matrix is oriented as in \eqref{eq:heineman}. 
These facts allow us to prove the next lemma.
\begin{lemma}\label{tricky.identity}
$\phi_i$ be $k$ elements of a commutative ring $\mathbf{R}$.
Suppose $1\leq u < v\leq k$. Then we have
\rm\begin{enumerate}
\item \label{tricky:one}

\ $\displaystyle
V^{(k)} = \prod_{1\leq i< j\leq k} (\phi_j-\phi_i)
\,,
$
\item \label{tricky:two}

\ $\displaystyle
V^{(k)}=
V^{(k)}_{\widehat u} \cdot \prod_{1\leq i < u} (\phi_u-\phi_i)
\cdot \prod_{u<j \leq k} (\phi_j-\phi_u)
\,,$
\item \label{tricky:three}
\begin{align*}
& (\phi_v-\phi_u) \cdot V^{(k)}     \\
&= V^{(k)}_{\widehat u\,\widehat v} 
\cdot \prod_{1\leq i < u} (\phi_u-\phi_i) \cdot \prod_{u<j \leq k} (\phi_j-\phi_u)  
\cdot \prod_{1\leq i < v} (\phi_v-\phi_i) \cdot \prod_{v<j \leq k} (\phi_j-\phi_v)
\end{align*}
\item\label{tricky:four}
\begin{equation}\label{eq:tricky:four}
(\phi_v-\phi_u)\, V^{(k)}_{\widehat u}\, V^{(k)}_{\widehat v}
= V^{(k)}\,V^{(k)}_{\widehat u\,\widehat v}
\,.
\end{equation}
\end{enumerate}
\end{lemma}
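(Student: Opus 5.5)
The plan is to derive all four parts from the classical Vandermonde evaluation, which holds as a polynomial identity in $\mathbb{Z}[x_1,\dots,x_k]$ and therefore specializes to any commutative ring $\mathbf{R}$ by sending $x_i\mapsto\phi_i$. No division is needed anywhere, so zero divisors in $\mathbf{R}$ cause no trouble.

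For part~\eqref{tricky:one}, I will quote the standard evaluation of $\|\,0,1,\dots,k-1\,\|$, with rows indexed by $\phi_i$ and columns by increasing powers; it equals $\prod_{i<j}(\phi_j-\phi_i)$. If a proof is wanted, I will use the usual argument: subtract $\phi_1$ times column $t$ from column $t+1$, working from right to left, expand along the first row, and induct on $k$.

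For parts~\eqref{tricky:two} and~\eqref{tricky:three}, I will observe that $V^{(k)}_{\widehat u}$ is exactly the Vandermonde determinant $\|\,0,\dots,k-2\,\|$ in the $k-1$ elements $\phi_i$ with $i\ne u$, taken in their original order. Likewise $V^{(k)}_{\widehat u\,\widehat v}$ is the Vandermonde determinant in the $k-2$ elements with $i\notin\{u,v\}$. Applying part~\eqref{tricky:one} to these smaller families gives each as the product of $(\phi_j-\phi_i)$ over pairs $i<j$ avoiding $u$ (respectively avoiding both $u$ and $v$).

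Part~\eqref{tricky:two} then follows by partitioning the pairs $i<j$ in $\{1,\dots,k\}$ into those not containing $u$ and those containing $u$. The pairs containing $u$ contribute exactly $\prod_{i<u}(\phi_u-\phi_i)\prod_{j>u}(\phi_j-\phi_u)$.

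For part~\eqref{tricky:three}, I will partition the pairs into three groups: those avoiding both $u$ and $v$, those containing $u$, and those containing $v$. The single pair $\{u,v\}$ is the only one counted in both of the last two groups. So the right-hand side equals $V^{(k)}_{\widehat u\,\widehat v}$ times the product over all pairs meeting $\{u,v\}$, times one extra factor $(\phi_v-\phi_u)$. That is $(\phi_v-\phi_u)V^{(k)}$, as required. This double-counting bookkeeping, including checking that the orientation of the factor is $\phi_v-\phi_u$ since $u<v$, is the only step where care is needed.

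For part~\eqref{tricky:four}, I will multiply the two instances of part~\eqref{tricky:two}, one for $u$ and one for $v$. This gives
\[
(V^{(k)})^2 = V^{(k)}_{\widehat u}V^{(k)}_{\widehat v}\,P_uP_v,
\]
where $P_u$ and $P_v$ are the two products of linear factors. Part~\eqref{tricky:three} says $(\phi_v-\phi_u)V^{(k)}=V^{(k)}_{\widehat u\,\widehat v}P_uP_v$. Cancelling $P_uP_v$ would need it to be a non-zero-divisor. To avoid that, I will compare the two sides directly as products of factors $(\phi_j-\phi_i)$.

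The left side $(\phi_v-\phi_u)V^{(k)}_{\widehat u}V^{(k)}_{\widehat v}$ consists of the following factors:
\begin{itemize}
\item pairs avoiding both $u$ and $v$, which appear twice;
\item pairs containing $v$ but not $u$, and pairs containing $u$ but not $v$, each appearing once;
\item one factor $(\phi_v-\phi_u)$.
\end{itemize}
The right side $V^{(k)}V^{(k)}_{\widehat u\,\widehat v}$ contains every pair once plus the pairs avoiding both $u$ and $v$ once more. These are the same multiset of factors, so the identity holds polynomially and hence in $\mathbf{R}$.

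The main obstacle I expect is purely combinatorial: keeping track of the pair $\{u,v\}$ and the signs so that the multisets of factors match exactly.
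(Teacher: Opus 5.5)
Your proposal is correct and follows the paper's proof. Part (1) is cited as the standard Vandermonde evaluation, and parts (2) and (3) use the same bookkeeping of which factors $(\phi_j-\phi_i)$ involve $\phi_u$ or $\phi_v$, with the doubly counted $(\phi_v-\phi_u)$ compensated by the extra factor. For part (4) the paper only says it follows immediately from (1)--(3), whereas your direct comparison of factor multisets is slightly more careful, because it avoids cancelling $P_uP_v$, which need not be a non-zero-divisor in a general commutative ring $\mathbf{R}$.
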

\begin{proof} 
Part \eqref{tricky:one}.\  This is the standard
result concerning Vandermonde determinants. For completeness,
we  cite pages 12 and 13 of  \cite{franklin}.

\medskip
\noindent
Part \eqref{tricky:two}.\  The terms 
$\prod_{1\leq i < v} (\phi_v-\phi_i)$ and 
$\prod_{u<j \leq k} (\phi_j-\phi_u)$
on the right-hand side of Part~(\ref{tricky:two}) account for all the 
differences from $\prod_{1\leq i< j\leq k} (\phi_j-\phi_i)$ that contain $\phi_u$.

\medskip
\noindent
Part \eqref{tricky:three}.\  As in Part~\eqref{tricky:two}, the  terms 
$\prod_{1\leq i < u} (\phi_u-\phi_i)$ and 
$\prod_{u<j \leq k} (\phi_j-\phi_u)$
on the right-hand side of \eqref{tricky:three} account for all the 
differences that contain $\phi_u$ from $\prod_{1\leq i< j\leq k}(\phi_j-\phi_i)$.
Likewise, the terms the  terms 
$\prod_{1\leq i < v} (\phi_v-\phi_i)$ and 
$\prod_{v<j \leq k} (\phi_j-\phi_v)$
on the right-hand side of \eqref{tricky:three} account for all the 
differences that contain $\phi_v$ from $\prod_{1\leq i< j\leq k}(\phi_j-\phi_i)$. But in that process, the term $(\phi_v-\phi_u)$ is eliminated
twice, so the last factor $(\phi_v-\phi_u)$ ensures that
$(\phi_v-\phi_u)$ is eliminated exactly once.

\medskip
\noindent
Part \eqref{tricky:four}  
follows immediately from Parts \eqref{tricky:one}, \eqref{tricky:two} and \eqref{tricky:three}.
\end{proof}


Next,
we collect some classical results
concerning determinants that will be used later.
One source for details is \cite{aitken}, specifically
Sections 19, 33, 35, and 36. 

Let $M$ be an $n\times n$ matrix, $n\geq 2$, with
entries $m_{i\,j}$.
For each $u$, let $M_{\widehat{u}}$ be the 
$(n-1)\times(n-1)$ matrix obtained by deleting row $u$
and column $n$ from $M$. Of course, the reader is familiar
with the following Laplace expansion: 
\begin{formula} \label{laplace.1}
\begin{equation*} 
\hbox{\rm det}\, M = \sum_{u=1}^n (-1)^{u+n}\, m_{u\,n}\, 
\hbox{\rm det}\, M_{\widehat{u}}
\,.
\end{equation*}
\end{formula}

When the preceding formula is applied to $V^{(k)}_{m}$,
we obtain
\begin{equation}\label{le:laplace}
V^{(k)}_{m} = \sum_{u=1}^k (-1)^{k+u} \, \phi_u^{m} V^{(k)}_{\widehat u}    \,.
\end{equation}

The reader can certainly imagine applying the above Laplace
expansion to to each summand in 
Formula~\ref{laplace.1}. To do so,  one defines
$M_{\widehat{u}\,\widehat{v}}$ to be the 
$(n-2)\times(n-2)$ matrix obtained by deleting rows $u$ and $v$
and columns $n-1$ and  $n$ from $M$. The challenging part
is keeping track of  the signs. 
The resulting expansion---also due to Laplace---can
be written as follows:
\begin{formula} \rm \label{laplace.2}
\begin{equation*} 
\hbox{\rm det}\, M = \sum_{1\leq u< v\leq n} (-1)^{u+v+1}\, 
\begin{vmatrix}
m_{u\, n-1} & m_{u\,n} \\
m_{v\,n-1} & m_{v\, n}
\end{vmatrix} \, 
\hbox{\rm det}\, M_{\widehat{u}\,\widehat{v}}
\,,
\end{equation*}
where we use the vertical bar notation for $2\times2$ determinants,
because it is a bit more efficient, and it is easily recognized.
\end{formula} 


Another classical result is the following:
\begin{formula}[Binet--Cauchy identity]\rm \label{fo:binet-cauchy}
Suppose $A$ is a $m\times n$ matrix and $B$ is an $n\times m$ matrix 
over a commutative ring. For each subset $S \subseteq \{1 \dots n\}$ 
of size $m$, we can form square matrices $ A_S $ whose columns, and 
$ B_S $ whose rows, are indexed by $S$. Then 
\begin{equation*} 
\det{AB}=\sum_{|S|=m} \det A_S\, \det B_S
\end{equation*}
\end{formula}

The Laplace expansion dates back to 1772. It can be 
further generalized beyond the 
form that we have given above. The 
Binet--Cauchy identity was discovered independently
by Binet and Cauchy in 1812.

%
%
\section{The Main Identity}

\begin{definition}\label{def.vanders1}\rm
We introduce some additional notation for generalized Vandermonde 
determinants. Let $\phi_i$ be $k$ elements of a commutative ring  
$\mathbf{R}$, and let $\ell\in \mathbb{Z}$ such that 
$1\leq \ell \leq k$, and let $c_1, \cdots, c_{\ell}\in \mathbb{Z}$. 
We define
\begin{equation}
    V^{(k)}_{c_1,\ \dots \ c_{\ell}}  
    =   
    V^{(k)}_{c_1,\ \dots \ c_{\ell}}|_{\phi_i} 
    = \|\ 0, \ 1,\ 2,\ \dots,\ k-\ell-1,
\ c_1,\ \dots,\ c_{\ell}\ \| \,.
\end{equation}
In this paper, we are most interested in $\ell \leq 2$. When $\ell=0$, we simply have $V^{(k)}$. When  the 
$\phi_i$ are the $k$ roots of the $k$-bonacci 
characteristic polynomial, 
$\ell=1$ leads to $V^{(k)}_{n}=V^{(k)}F^{(k)}_n$, and $\ell=2$ leads 
to $V^{(k)}_{m,\,n}=V^{(k)}F^{(k)}_{m,\,n}$.
\end{definition}

The next theorem gives an identity that
is the key result for our generalization of 
the identities of Cassini, Catalan, and Vajda. 

\begin{theorem}\label{lemma:second_}
Let the $\phi_i$ be $k$ elements of a field $\mathbb{F}$.
For integers $n$, $p$, and $q$, $p \geq 0$, then it holds that 
\begin{equation}\label{eq.new}
V^{(k)}_{n}\,V^{(k)}_{n+p+q} - V^{(k)}_{n+p}\,V^{(k)}_{n+q} =  \sum_{i=0}^{p-1}V^{(k)}\,
V^{(k)}_{n+q+i,\, n+p-1-i} \,.
\end{equation}
\end{theorem}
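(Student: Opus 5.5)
The argument is to expand both sides in the Laplace forms \eqref{le:laplace} and Formula~\ref{laplace.2}, and then match them using Lemma~\ref{tricky.identity}\eqref{tricky:four}.

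\textbf{Expanding the left-hand side.} By \eqref{le:laplace}, $V^{(k)}_m=\sum_{u}(-1)^{k+u}\phi_u^m V^{(k)}_{\widehat u}$. Hence
\[
V^{(k)}_{n}V^{(k)}_{n+p+q}-V^{(k)}_{n+p}V^{(k)}_{n+q}
=\sum_{u,v}(-1)^{u+v}V^{(k)}_{\widehat u}V^{(k)}_{\widehat v}\bigl(\phi_u^n\phi_v^{n+p+q}-\phi_u^{n+p}\phi_v^{n+q}\bigr).
\]
The diagonal terms $u=v$ vanish. Pairing $(u,v)$ with $(v,u)$ for $u<v$ gives the coefficient
\[
\phi_u^n\phi_v^{n+q}(\phi_v^p-\phi_u^p)+\phi_v^n\phi_u^{n+q}(\phi_u^p-\phi_v^p)
=(\phi_v^p-\phi_u^p)\bigl(\phi_u^n\phi_v^{n+q}-\phi_u^{n+q}\phi_v^n\bigr).
\]
Now factor $\phi_v^p-\phi_u^p=(\phi_v-\phi_u)\sum_{i=0}^{p-1}\phi_v^{i}\phi_u^{p-1-i}$. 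Then apply \eqref{eq:tricky:four} to replace $(\phi_v-\phi_u)V^{(k)}_{\widehat u}V^{(k)}_{\widehat v}$ by $V^{(k)}V^{(k)}_{\widehat u\widehat v}$. This gives
\[
\text{LHS}=V^{(k)}\sum_{u<v}(-1)^{u+v}V^{(k)}_{\widehat u\widehat v}\sum_{i=0}^{p-1}\phi_v^i\phi_u^{p-1-i}\bigl(\phi_u^n\phi_v^{n+q}-\phi_u^{n+q}\phi_v^n\bigr).
\]
All of this holds in the ring of Laurent polynomials. If $n$ or $q$ is negative, I either assume the $\phi_i$ are nonzero (they are, for $k$-bonacci roots), or I multiply through by a large power of $\prod\phi_i$ to reduce to nonnegative exponents.

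\textbf{Expanding the right-hand side.} Apply Formula~\ref{laplace.2} to $V^{(k)}_{a,b}$ with $a=n+q+i$ and $b=n+p-1-i$:
\[
V^{(k)}_{a,b}=\sum_{u<v}(-1)^{u+v+1}\bigl(\phi_u^a\phi_v^b-\phi_u^b\phi_v^a\bigr)V^{(k)}_{\widehat u\widehat v}.
\]
The minors $M_{\widehat u\widehat v}$ of this matrix are exactly the $V^{(k)}_{\widehat u\widehat v}$ of Definition~\ref{def.vanders}, since the first $k-2$ columns are $0,\dots,k-3$. So the proof reduces to a termwise polynomial identity: for each fixed $u<v$,
\[
\sum_{i=0}^{p-1}\phi_v^i\phi_u^{p-1-i}\bigl(\phi_u^n\phi_v^{n+q}-\phi_u^{n+q}\phi_v^n\bigr)
=-\sum_{i=0}^{p-1}\bigl(\phi_u^{n+q+i}\phi_v^{n+p-1-i}-\phi_u^{n+p-1-i}\phi_v^{n+q+i}\bigr).
\]

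\textbf{The main obstacle: the signs and the reindexing.} The left side of this identity expands to
\[
\sum_i\phi_u^{n+p-1-i}\phi_v^{n+q+i}-\sum_i\phi_u^{n+q+p-1-i}\phi_v^{n+i}.
\]
In the second sum, substitute $i\mapsto p-1-i$. It becomes $\sum_i\phi_u^{n+q+i}\phi_v^{n+p-1-i}$. This matches the right side exactly, so the identity holds.

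What needs real care is the sign convention. I have to check that the $(-1)^{u+v+1}$ in Formula~\ref{laplace.2} and the $(-1)^{u+v}$ coming from two applications of \eqref{le:laplace} (namely $(-1)^{2k+u+v}$) combine consistently, with the orientation $(\phi_v-\phi_u)$ in \eqref{eq:tricky:four}. I will verify this against $k=2$, where the claim reduces to ordinary Vajda via the Binet formula.

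If the overall sign comes out reversed, the likely culprit is the ordering of $a$ and $b$ in the $2\times2$ block, which flips the sign. I will fix that by checking $p=1$, $q=0$, where the left side is $V_nV_{n+1}-V_{n+1}V_n=0$ and the right side is $V^{(k)}V^{(k)}_{n,n}=0$. That case is consistent but does not detect the sign, so I will also check $p=1$, $q=1$ against the Cassini case for $k=2$.
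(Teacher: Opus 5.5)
Your proof is correct and is essentially the paper's argument. The paper writes the left-hand side as the determinant of a product of a $2\times k$ and a $k\times 2$ matrix and applies Binet--Cauchy, which amounts exactly to your pairing of $(u,v)$ with $(v,u)$; then, as you do, it factors out $\phi_v-\phi_u$, invokes \eqref{eq:tricky:four}, expands the $2\times 2$ block, reindexes $i\mapsto p-1-i$, and recognizes the two-column Laplace expansion of Formula~\ref{laplace.2}. Your bookkeeping of $(-1)^{u+v}$ on the left against $(-1)^{u+v+1}$ on the right already settles the sign, so the closing sanity checks are unnecessary, and working in the Laurent polynomial ring cleanly handles both negative exponents and coincident $\phi_i$.
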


\begin{remark}\rm\quad
\begin{enumerate}
\item If $q$ and $n+q$ are non-negative, then the requirement that $\mathbb{F}$
be a field can be relaxed to requiring only that
$\mathbb{F}$ be a commutative ring.
\item
To handle the case $p<0$, replace $n$ by $n+p+q$, replace $p$ by $-p$, and replace $q$ by $-q$. The left-hand side of \eqref{eq.new} is unchanged,
while the right-hand side becomes 
\[ 
\sum_{i=0}^{|p|-1}V^{(k)}\, V^{(k)}_{n+p+i,\, n+q-1-i} 
\,.
\]
\end{enumerate}

\end{remark}

\begin{proof} We see that \eqref{eq.new} holds if $p=0$ as both sides evaluate to
zero. Also, both sides of \eqref{eq.new} vanish 
if two or more of the $\phi_i$ are identical, as the matrices are all singular and
their determinants are zero. 
We may therefore assume that $p > 0 $ and that the $\phi_i$ are distinct.

Applying \eqref{laplace.1} to each of $V^{(k)}_{n}$,
$V^{(k)}_{n+p+q}$, $V_{n+p}^{(k)}$, and $V_{n+q}^{(k)}$,
we have
\begin{align}\nonumber
  &   V^{(k)}_{n}V^{(k)}_{n+p+q} - V_{n+p}^{(k)}V_{n+q}^{(k)}     
  \\[1ex]
  \nonumber
        &\quad= \left(\sum_{u=1}^k   (-1)^{k+u}
        \,\phi_u^n \,V^{(k)}_{\widehat{u}}\right)
        \left(\sum_{v=1}^k   (-1)^{k+v}
        \,\phi_v^{n+p+q} \,V^{(k)}_{\widehat{v}}\right) \\
        \nonumber
        &\qquad- \left(\sum_{u=1}^k  (-1)^{k+u}
        \,\phi_u^{n+p}\,V^{(k)}_{\widehat{u}}\right)
        \left(\sum_{v=1}^k
        (-1)^{k+v}\,\phi_v^{n+q}\,V^{(k)}_{\widehat{v}}\right) \\[1ex]
        \nonumber
        &\quad=\ \begin{vmatrix} 
         \left(\sum_{u=1}^k   (-1)^{k+u}
        \,\phi_u^n \,V^{(k)}_{\widehat{u}}\right)
        & \left(\sum_{v=1}^k
        (-1)^{k+v}\,\phi_v^{n+q}\,V^{(k)}_{\widehat{v}}\right)  \\
         \left(\sum_{u=1}^k  (-1)^{k+u}
        \,\phi_u^{n+p}\,V^{(k)}_{\widehat{u}}\right)
        & \left(\sum_{v=1}^k   (-1)^{k+v}
        \,\phi_v^{n+p+q} \,V^{(k)}_{\widehat{v}}\right)
        \end{vmatrix} \\[1ex]
        \label{determinant.1}
        &\quad=\ \det \left[
        \begin{pmatrix} 
        1 & \dots &1 \\
        \phi^{p}_1 & \dots & \phi^{p}_k \\
        \end{pmatrix} 
        \begin{pmatrix} 
        (-1)^{1}V^{(k)}_{\widehat{1}}\phi^{n}_1 & (-1)^{1}V^{(k)}_{\widehat{1}}\phi^{n+q}_1 \\ 
        \vdots & \vdots \\  
        (-1)^{k}V^{(k)}_{\widehat{k}}\phi^{n}_k & (-1)^{k}V^{(k)}_{\widehat{k}}\phi^{n+q}_k \\
        \end{pmatrix}
        \right] 
\,.
\end{align}
By the Binet--Cauchy identity \eqref{fo:binet-cauchy}, the
determinant in  \eqref{determinant.1}  equals  
\begin{align}\nonumber
& \sum_{1\leq u< v\leq k} 
       \ \begin{vmatrix} 
        1 & 1 \\
        \phi^{p}_u & \phi^{p}_v 
        \nonumber
        \end{vmatrix}    
      \ \ \begin{vmatrix} 
        (-1)^{u}V^{(k)}_{\widehat{u}}\phi^{n}_u & (-1)^{u}V^{(k)}_{\widehat{u}}\phi^{n+q}_u \\ 
        (-1)^{v}V^{(k)}_{\widehat{u}}\phi^{n}_v & (-1)^{v}V^{(k)}_{\widehat{v}}\phi^{n+q}_v \\ 
        \end{vmatrix}  \\[1ex]  
        \nonumber
 &\qquad =  \sum_{1\leq u< v\leq k}  (-1)^{u+v}   (\phi_v^{p}-\phi_u^{p}) \,     V^{(k)}_{\widehat u}\, V^{(k)}_{\widehat v}
 \ \begin{vmatrix} 
        \phi^{n}_u & \phi^{n+q}_u \\ 
        \phi^{n}_v & \phi^{n+q}_v \\ 
        \end{vmatrix}  \\[1ex]
\nonumber & \text{(then factor out $\phi_v-\phi_u$ to obtain)} \\
\nonumber
\nonumber
&\qquad=\sum_{1\leq u< v\leq k}  (-1)^{u+v}   (\phi_v-\phi_u)    V^{(k)}_{\widehat u}\, V^{(k)}_{\widehat v} \left(\sum_{i=0}^{p-1}\phi_v^{p-1-i}\,\phi_u^{i}
\right) \, 
 \  \begin{vmatrix} 
        \phi^{n}_u & \phi^{n+q}_u \\ 
        \phi^{n}_v & \phi^{n+q}_v \\ 
        \end{vmatrix}   \\[1ex] 
\label{set.me.up}
&\qquad
=\  \sum_{1\leq u< v\leq k}  (-1)^{u+v}   
\,     V^{(k)} \, V^{(k)}_{\widehat u\,\widehat v}  \left(\sum_{i=0}^{p-1}\phi_v^{p-1-i}\,\phi_u^{i}
\right)
\, \begin{vmatrix} 
        \phi^{n}_u & \phi^{n+q}_u \\ 
        \phi^{n}_v & \phi^{n+q}_v \\ 
        \end{vmatrix},  
\end{align}
where \eqref{set.me.up} follows from \eqref{eq:tricky:four}.
Now we would like to move each term 
$ \phi_v^{p-1-i}\,\phi_u^{i} $ in the summation 
in \eqref{set.me.up} into 
the determinant in \eqref{set.me.up}. To do so, we expand the
determinant, thus giving us a summation with a plus sign
and a summation with a minus sign. When we reverse the
direction  in the summation with the minus sign,
the terms line up to form a new determinant. 
\begin{align}
\label{after.the.switch}
&  \sum_{i=0}^{p-1} V^{(k)} 
\sum_{1\leq u< v\leq k}  (-1)^{u+v+1}\, 
\begin{vmatrix}
        \phi_u^{n+q+i} & \phi_u^{n+p-1-i} \\
    \phi_v^{n+q+i} & \phi_v^{n+p-1-i}
    \end{vmatrix}  V^{(k)}_{\widehat u\,\widehat v}
    \, \\[1ex]
\label{last.step}
&\quad=\  \sum_{i=0}^{p-1}V^{(k)}V^{(k)}_{n+q+i,\, n+p-1-i} \,,
\end{align}
where 
the Laplace expansion \eqref{laplace.2} 
gets us from \eqref{after.the.switch}   to \eqref{last.step}.
\end{proof}

\section{Focusing on k-bonacci}

\begin{lemma}
    If $\phi_i$ are $k$ unique elements of a commutative ring $\mathbf{R}$, whose minimum monic polynomial has coefficients in a subring $\mathbf{S} \subseteq \mathbf{R}$, and let $\ell\in \mathbb{Z}$ such that $1\leq \ell \leq k$, and let $c_1, \cdots, c_{\ell}\in \mathbb{Z}$, then
\begin{equation}\label{eq:Binet}
 \frac{V^{(k)}_{c_1,c_2, \dots,  c_{\ell}}}{V^{(k)}} \in \mathbf{S}
\end{equation}
\end{lemma}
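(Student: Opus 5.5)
Our plan is to recognize $V^{(k)}_{c_1,\dots,c_\ell}/V^{(k)}$ as the coefficient of a suitable linear map expressed in the basis $1,x,\dots,x^{k-1}$ of $\mathbf{S}[x]/(P)$, where
\[
P(x)=\prod_{i=1}^k (x-\phi_i)=x^k-a_{k-1}x^{k-1}-\cdots-a_0,
\]
with $a_j\in\mathbf{S}$ by hypothesis. We interpret ``unique'' as meaning that the differences $\phi_j-\phi_i$ are invertible (or at least non-zero-divisors), so that dividing by $V^{(k)}$ makes sense. For $c\geq 0$, repeated division by the monic polynomial $P$ gives a reduction
\[
x^{c}\equiv \sum_{j=0}^{k-1} r_{c,j}\,x^j \pmod{P},\qquad r_{c,j}\in\mathbf{S}.
\]
Substituting $x=\phi_i$ gives $\phi_i^{c}=\sum_j r_{c,j}\phi_i^j$ for every $i$. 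For negative $c$ we need $a_0$ to be a unit of $\mathbf{S}$. This holds in the $k$-bonacci case, where $a_0=1$. In general we can write $x^{-1}=a_0^{-1}(x^{k-1}-a_{k-1}x^{k-2}-\cdots-a_1)$, so the same expansion holds with coefficients in $\mathbf{S}$. We expect this to be the main obstacle: as stated, the lemma needs either $c_i\geq 0$ or $a_0\in\mathbf{S}^\times$, and we would add that hypothesis explicitly.

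Next, the column of $\|\,0,1,\dots,k-\ell-1,c_1,\dots,c_\ell\,\|$ indexed by $c_s$ equals $\sum_j r_{c_s,j}$ times the column $(\phi_i^j)_i$. In matrix form,
\[
\bigl(\phi_i^{t}\bigr)=\bigl(\phi_i^{j}\bigr)_{i,\,j=0..k-1}\cdot R,
\]
where $R$ is the $k\times k$ matrix whose first $k-\ell$ columns are the standard basis vectors $e_0,\dots,e_{k-\ell-1}$ and whose last $\ell$ columns are the coefficient vectors $(r_{c_s,j})_j$. Multiplicativity of the determinant then gives
\[
V^{(k)}_{c_1,\dots,c_\ell}=V^{(k)}\det R.
\]

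Finally, $\det R$ is a polynomial in the $r_{c_s,j}$ with integer coefficients, so it lies in $\mathbf{S}$. In fact it equals the $\ell\times\ell$ minor of the coefficient vectors in rows $k-\ell,\dots,k-1$. Since $V^{(k)}$ is not a zero divisor, we conclude $V^{(k)}_{c_1,\dots,c_\ell}/V^{(k)}=\det R\in\mathbf{S}$.
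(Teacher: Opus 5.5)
Your argument is correct, and it takes a genuinely different route from the paper's. The paper uses Heineman's divisibility argument: viewed as a polynomial in the $\phi_i$, the numerator vanishes when two $\phi_i$ coincide, so it is divisible by the difference product $V^{(k)}$. That one-line proof implicitly works with indeterminates and then specializes. It also leaves to the reader the step that actually places the quotient in $\mathbf{S}$: the quotient is a symmetric integer polynomial, hence a polynomial in the elementary symmetric functions, which up to sign are the coefficients of $\prod_i(x-\phi_i)$. Its payoff is the structural fact that the ratio is a universal symmetric polynomial in the roots.

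Your factorization $(\phi_i^{t})=(\phi_i^{j})\,R$, via remainders modulo the monic $P$ in $\mathbf{S}[x]$, avoids symmetric-function theory and works directly in the ring. It also gives an explicit value for the ratio: the bottom $\ell\times\ell$ minor of the remainder coefficients. For the $k$-bonacci polynomial, $r_{c,k-1}=F^{(k)}_c$ and $r_{c,k-2}=F^{(k)}_{c+1}-F^{(k)}_c$. So your case $\ell=2$ gives $F^{(k)}_{m,n}=F^{(k)}_{m+1}F^{(k)}_n-F^{(k)}_mF^{(k)}_{n+1}$ at once, i.e.\ Theorem~\ref{th:vajda1}, without going through Vajda's identity.

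Your caveat on negative indices is a real correction, not just caution; the paper's proof never addresses them. Take $k=2$, $\phi_1=1$, $\phi_2=2$, $\mathbf{S}=\mathbb{Z}$, $\mathbf{R}=\mathbb{Q}$. Then $V^{(2)}=1$ but $V^{(2)}_{-1}=1/2-1=-1/2\notin\mathbb{Z}$. So the hypothesis $a_0\in\mathbf{S}^{\times}$ is needed; it holds for $k$-bonacci, where $a_0=1$. Likewise, your reading of ``unique'' as making $V^{(k)}$ a non-zero-divisor is needed for the quotient to be well defined.
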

\begin{proof}
Equating any two $\phi_i$ leads to a singular matrix so the 
numerator is 
divisible by the difference-product of the $\phi_i$, 
which is exactly the 
denominator.\footnote{This proof expands an argument 
by Heineman in \cite{heineman1929generalized}}
\end{proof}
\begin{lemma}\label{co:orthogonal}
If $\phi_i$ are the roots of the $k$-bonacci characteristic polynomial,
then $V^{(k)}_{c_1,\ \dots \ c_{\ell}}$ satisfies the
$k$-bonacci recursion in each index, that is,
\begin{equation}
    V^{(k)}_{c_1, \dots , c_{t}, \dots, c_{\ell}} = \sum_{i=1}^{k} 
    V^{(k)}_{c_1, \dots , (c_{t}-i), \dots, c_{\ell}}
\end{equation}
\end{lemma}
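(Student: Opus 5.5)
The plan is to use multilinearity of the determinant in a single column. By definition,
\[
V^{(k)}_{c_1,\dots,c_\ell}=\|\ 0,\ 1,\ \dots,\ k-\ell-1,\ c_1,\ \dots,\ c_\ell\ \|,
\]
and the index $c_t$ enters only through column $k-\ell+t$. The entries of that column are $\phi_j^{c_t}$ for $j=1,\dots,k$.

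First I record the one-variable fact. Since each $\phi_j$ is a root of $x^k-x^{k-1}-\cdots-1$, we have $\phi_j^k=\sum_{i=1}^k\phi_j^{k-i}$. The constant term of this polynomial is $-1$, so every root is nonzero and hence invertible in $\mathbb{C}$. Multiplying by $\phi_j^{c_t-k}$ therefore gives
\[
\phi_j^{c_t}=\sum_{i=1}^k \phi_j^{c_t-i}
\]
for every integer $c_t$, including negative ones. Thus the column vector $(\phi_1^{c_t},\dots,\phi_k^{c_t})^T$ equals $\sum_{i=1}^k(\phi_1^{c_t-i},\dots,\phi_k^{c_t-i})^T$.

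Second, I apply linearity of the determinant in column $k-\ell+t$, holding all other columns fixed. This splits $V^{(k)}_{c_1,\dots,c_t,\dots,c_\ell}$ into the sum of the $k$ determinants obtained by replacing $c_t$ with $c_t-i$ for $i=1,\dots,k$. That sum is exactly the right-hand side of the claimed identity.

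Third, I check the edge cases. The fixed exponents $0,\dots,k-\ell-1$ are never touched, so no case analysis is needed. Repeated exponents, for example $c_t-i$ coinciding with another column index, only make some terms vanish and do not affect the identity.

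There is no real obstacle. The only point needing care is justifying negative exponents, which the invertibility of the roots settles.
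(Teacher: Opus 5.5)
Your proof is correct and follows the paper's argument, which is simply ``the definitions and the multilinearity of determinants'': you expand along the column carrying $c_t$ using $\phi_j^{c}=\sum_{i=1}^{k}\phi_j^{c-i}$. Your observation that the roots are nonzero, so this relation also holds for negative exponents, makes explicit a detail the paper leaves unstated.
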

\begin{proof}
    This follows from the definitions and the multilinearity of determinants.
\end{proof}

\begin{definition}\rm
    Considering these last two lemmas in the case that the 
$\phi_i$ are the $k$ distinct roots of the characteristic polynomial
for the $k$-bonacci recursion, we are motivated to generalize the 
$k$-bonacci numbers by defining the 
$(\ell \times k)$-bonacci numbers 
\begin{equation} \label{eq:newFibonacci}
F^{(k)}_{c_1, \dots,  c_{\ell}} = \left. \frac{V^{(k)}_{c_1, \dots,  
c_{\ell}}}{V^{(k)}}\right|_{\phi_i} \,,
    \end{equation} 
which are integer valued and satisfy the $k$-bonacci recursion in each 
of their indices.

We note that equations \eqref{eq.binet} and \eqref{eq.D} 
are consistent with
this last definition.    
\end{definition}
\begin{theorem}[Vajda's identity for the $k$-bonacci 
numbers]\label{th:vajda}  For integers $n, \ p$  and $q$, $p\geq 0$,
it holds that
\begin{equation}\label{gen.vajda}
F^{(k)}_{n}F^{(k)}_{n+p+q}-F^{(k)}_{n+p}F^{(k)}_{n+q} = 
\sum_{i=0}^{p-1}F^{(k)}_{n+q+i,\, n+p-1-i} \,, \end{equation}
\end{theorem}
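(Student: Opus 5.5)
The plan is to specialize Theorem~\ref{lemma:second_} to the case where the $\phi_i$ are the $k$ roots of the $k$-bonacci characteristic polynomial $x^k-x^{k-1}-\cdots-1$, taken in the field $\mathbb{C}$. Dividing both sides of \eqref{eq.new} by $\bigl(V^{(k)}\bigr)^2$ should then give \eqref{gen.vajda}. Before dividing, I must check that $V^{(k)}\neq 0$, i.e.\ that these roots are distinct.

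For distinctness, I would argue through the auxiliary polynomial $g(x)=(x-1)\bigl(x^k-x^{k-1}-\cdots-1\bigr)=x^{k+1}-2x^k+1$. A repeated root $\alpha$ of the characteristic polynomial would also be a repeated root of $g$. This forces $g'(\alpha)=(k+1)\alpha^k-2k\alpha^{k-1}=0$, so $\alpha=0$ or $\alpha=2k/(k+1)$.
\begin{itemize}
\item The value $\alpha=0$ is impossible, since $g(0)=1$.
\item For $\alpha=2k/(k+1)$, the equation $g(\alpha)=0$ reads $\alpha^k(\alpha-2)=-1$, i.e.\ $\alpha^k\cdot \tfrac{2}{k+1}=1$. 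Hence $(2k)^k\cdot 2=(k+1)^{k+1}$.
\item Parity rules this out when $k$ is even, because the right side is then odd.
\item For $k$ odd, I would instead use the inequality $(k+1)^{k+1}>2(2k)^k$ for $k\ge 2$. Equivalently, $(1+1/k)^k(k+1)/2>2^{k}$, which fails the other way, so this direction needs care. A cleaner route is to note that $1$ is a simple root of $g$ and is not a root of the characteristic polynomial, since $1-k\neq 0$. The only candidate double root is then $\alpha=2k/(k+1)$, which is rational and not an integer when $k\ge2$. That contradicts the rational root theorem, since any rational root of a monic integer polynomial must be an integer.
\end{itemize}
Hence the roots are distinct and $V^{(k)}=\prod_{i<j}(\phi_j-\phi_i)\neq 0$ by Lemma~\ref{tricky.identity}\eqref{tricky:one}.

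With $V^{(k)}\neq0$, Definition~\ref{def.vanders1} gives $V^{(k)}_n=V^{(k)}F^{(k)}_n$ for every integer $n$, together with $V^{(k)}_{m,n}=V^{(k)}F^{(k)}_{m,n}$. Negative indices cause no trouble, because the $\phi_i$ are nonzero: the constant term of the characteristic polynomial is $-1$. Substituting into \eqref{eq.new}, the left side becomes $\bigl(V^{(k)}\bigr)^2\bigl(F^{(k)}_nF^{(k)}_{n+p+q}-F^{(k)}_{n+p}F^{(k)}_{n+q}\bigr)$. The right side becomes $\bigl(V^{(k)}\bigr)^2\sum_{i=0}^{p-1}F^{(k)}_{n+q+i,\,n+p-1-i}$. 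Cancelling $\bigl(V^{(k)}\bigr)^2$ yields \eqref{gen.vajda}.

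The only real obstacle is the distinct-roots check. The rest is a direct substitution into Theorem~\ref{lemma:second_}, which was stated over a field precisely so that arbitrary integer $n$ and $q$ are permitted.
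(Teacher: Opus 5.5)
Your proposal is correct and follows the paper's own proof. The paper likewise takes the $\phi_i$ to be the roots of $x^k-x^{k-1}-\cdots-1$ and reads off \eqref{gen.vajda} from Theorem~\ref{lemma:second_} via \eqref{eq:newFibonacci}, i.e.\ by dividing through by $\bigl(V^{(k)}\bigr)^2$. The only difference is that you verify that the roots are distinct, which the paper merely asserts; your rational-root argument on $x^{k+1}-2x^k+1$ already covers every $k\ge 2$, so the parity step and the abandoned inequality can be dropped.
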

\begin{proof}
Let the  $\phi_i$  be the $k$ distinct roots of the $k$-bonacci 
characteristic polynomial. Then the 
result follows immediately from Theorem~\ref{lemma:second_}
and \eqref{eq:newFibonacci}. 
\end{proof}

\begin{corollary}[Catalan's identity for the $k$-bonacci numbers]
\label{th:catalan}  For any integer $n$, and $r\geq 1$, 
it holds that
\begin{equation}\label{gen.catalan}
\left( F^{(k)}_{n}\right)^2-F^{(k)}_{n-r}F^{(k)}_{n+r} = 
F^{(k)}_{n-1,\, n} + F^{(k)}_{n-2,\, n+1} +\cdots+
F^{(k)}_{n-r,\, n+r-1}
\,.
\end{equation}
\end{corollary}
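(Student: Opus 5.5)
The plan is to obtain \eqref{gen.catalan} as a direct specialization of Theorem~\ref{th:vajda}, exactly as Catalan's identity \eqref{eq:catalan} comes from Vajda's identity \eqref{eq:vajda} in the classical case. The choice of parameters must keep $p\geq 0$, as Theorem~\ref{th:vajda} requires. So I will not use $p=r$, $q=-r$ (that setting would need the case $p<0$ handled in the Remark). Instead I apply \eqref{gen.vajda} with $n$ replaced by $n-r$, $p=r$, and $q=-r$.

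With these parameters the indices on the left-hand side of \eqref{gen.vajda} are $n-r$, $(n-r)+r-r=n-r$, $(n-r)+r=n$, and $(n-r)-r$. That does not give the desired left-hand side, so I would instead substitute $n\mapsto n-r$, $p=r$, $q=r$. Then the left side becomes
\[
F^{(k)}_{n-r}F^{(k)}_{n+r}-F^{(k)}_{n}F^{(k)}_{n} ,
\]
which is the negative of what we want. Rather than fight the sign, the cleanest choice is to keep $n$ unchanged and take $p=r\geq 1$, $q=-r$. The left side of \eqref{gen.vajda} is then
\[
F^{(k)}_{n}F^{(k)}_{n+r-r}-F^{(k)}_{n+r}F^{(k)}_{n-r}
=\bigl(F^{(k)}_{n}\bigr)^2-F^{(k)}_{n-r}F^{(k)}_{n+r},
\]
which is exactly the left side of \eqref{gen.catalan}. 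Theorem~\ref{th:vajda} applies here because it allows arbitrary integer $q$ and requires only $p\geq 0$.

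The right-hand side of \eqref{gen.vajda} becomes $\sum_{i=0}^{r-1}F^{(k)}_{n-r+i,\,n+r-1-i}$. The remaining step is bookkeeping: list the terms and compare them with \eqref{gen.catalan}. The $i=0$ term is $F^{(k)}_{n-r,\,n+r-1}$, and the $i=r-1$ term is $F^{(k)}_{n-1,\,n}$. Reindexing by $j=r-1-i$, the general term is $F^{(k)}_{n-1-j,\,n+j}$ for $j=0,\dots,r-1$. This is precisely the list $F^{(k)}_{n-1,\,n}+F^{(k)}_{n-2,\,n+1}+\cdots+F^{(k)}_{n-r,\,n+r-1}$, which completes the proof.

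The only point needing care is this matching of indices, together with confirming that the hypothesis $p=r\geq 0$ of Theorem~\ref{th:vajda} holds. No further determinant manipulation is needed, since all the work is contained in Theorem~\ref{lemma:second_}.
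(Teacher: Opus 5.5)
Your final argument, taking $p=r\geq 1$ and $q=-r$ in Theorem~\ref{th:vajda} and reindexing the sum by $j=r-1-i$, is correct and is exactly the paper's proof. Delete the opening false starts, especially the claim that $p=r$, $q=-r$ would require the case $p<0$; that claim is wrong and contradicts the argument you actually give.
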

\begin{proof}
Let $r\geq 1$ be given. When we define $p=r$ and $q=-r$, and apply 
Theorem~\ref{th:vajda}, we see that \eqref{gen.vajda} 
reduces to \eqref{gen.catalan}.
\end{proof}
Taking $r=1$ in the last corollary, we obtain the following:
\begin{corollary}[Cassini's identity for the $k$-bonacci numbers]
\label{th:cassini}
For any integer $n$,  it holds that 
    \begin{equation}\label{gen.cassini}
\left(F^{(k)}_{n}\right)^{2}-
F^{(k)}_{n-1}\,F^{(k)}_{n+1}=F_{ n-1,\, n}^{(k)}\,.
\end{equation}
\end{corollary}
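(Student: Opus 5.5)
This is the special case $r=1$ of Corollary~\ref{th:catalan}, so the plan is a direct substitution rather than a new argument. Fix an integer $n$ and apply \eqref{gen.catalan} with $r=1$.

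The left-hand side of \eqref{gen.catalan} then becomes $\left(F^{(k)}_{n}\right)^2-F^{(k)}_{n-1}F^{(k)}_{n+1}$, which is exactly the left-hand side of \eqref{gen.cassini}. The sum on the right-hand side of \eqref{gen.catalan} runs over the terms $F^{(k)}_{n-j,\,n+j-1}$ for $j=1,\dots,r$. With $r=1$ it has the single term $F^{(k)}_{n-1,\,n}$, and this gives \eqref{gen.cassini}.

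Equivalently, one can go back to Theorem~\ref{th:vajda} with $p=1$ and $q=-1$. The sum $\sum_{i=0}^{0}F^{(k)}_{n+q+i,\,n+p-1-i}$ is the single term $F^{(k)}_{n-1,\,n}$. The hypothesis $p\ge 0$ of Theorem~\ref{th:vajda} holds, and $n$ and $q$ are allowed to be any integers, including negative ones. This matters because negative indices appear when $n$ is small. It is covered because Theorem~\ref{lemma:second_} is stated over a field, so negative powers of the $\phi_i$ are legitimate: the $k$-bonacci roots are nonzero, since the constant term of the characteristic polynomial is $-1$.

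There is no real obstacle. The only points to check are the index bookkeeping in the one-term sum and the fact that the result holds for all integers $n$, which follows from the remark above on negative indices.
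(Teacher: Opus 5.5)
Your proposal is correct and matches the paper, which obtains the identity by setting $r=1$ in Corollary~\ref{th:catalan} (itself Theorem~\ref{th:vajda} with $p=r$, $q=-r$); your direct substitution $p=1$, $q=-1$ into Theorem~\ref{th:vajda} is the same computation. Your added check that the $k$-bonacci roots are nonzero, so negative exponents make sense for every integer $n$, is a detail the paper leaves implicit.
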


We will need this for the next result.

\begin{lemma} For $k\geq 2$ and $n\in\mathbb{Z}$, it holds that
\begin{equation}\label{further.calc}
2F^{(k)}_{n} = F^{(k)}_{n+1}+F^{(k)}_{n-k}
\end{equation}
\end{lemma}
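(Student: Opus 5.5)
The plan is to derive the identity directly from the $k$-bonacci recursion by writing it at two consecutive indices and subtracting. The recursion $F^{(k)}_{m}=\sum_{j=1}^{k}F^{(k)}_{m-j}$ holds for all integers $m$, including negative ones: the sequence extends backwards uniquely, and the Binet expression \eqref{eq.binet} satisfies the recursion in $n$ by Lemma~\ref{co:orthogonal}. So we may apply it at any index.

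First, write the recursion at index $n+1$:
\[
F^{(k)}_{n+1}=F^{(k)}_{n}+F^{(k)}_{n-1}+\cdots+F^{(k)}_{n-k+1}.
\]
Second, write it at index $n$:
\[
F^{(k)}_{n}=F^{(k)}_{n-1}+F^{(k)}_{n-2}+\cdots+F^{(k)}_{n-k}.
\]
Third, subtract the second equation from the first. The sum $F^{(k)}_{n-1}+\cdots+F^{(k)}_{n-k+1}$ appears in both and cancels, leaving
\[
F^{(k)}_{n+1}-F^{(k)}_{n}=F^{(k)}_{n}-F^{(k)}_{n-k}.
\]
Rearranging gives $2F^{(k)}_{n}=F^{(k)}_{n+1}+F^{(k)}_{n-k}$, which is \eqref{further.calc}.

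There is essentially no obstacle. The only point needing care is that the argument is valid for every integer $n$, which requires the recursion to hold at all integer indices. If one prefers to argue purely with determinants, the same conclusion follows from Lemma~\ref{co:orthogonal} applied to $V^{(k)}_{n+1}$ and $V^{(k)}_{n}$, followed by division by $V^{(k)}$. The case $k=2$ is consistent as a check: it reads $2F_n=F_{n+1}+F_{n-2}$, which is true.
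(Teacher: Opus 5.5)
Your proof is correct and is essentially the paper's argument. The paper writes the $(k+1)$-term sum $F^{(k)}_{n}+F^{(k)}_{n-1}+\cdots+F^{(k)}_{n-k}$ and groups it once as $F^{(k)}_{n+1}+F^{(k)}_{n-k}$ and once as $F^{(k)}_{n}+F^{(k)}_{n}$, which is the same as your subtraction of the recursion at indices $n+1$ and $n$, just written as an addition.
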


\begin{proof}
Equation
\eqref{further.calc} is a consequence of  associativity: 
$ F^{(k)}_{n}+F^{(k)}_{n-1} + 
\cdots + F^{(k)}_{n-k-1} + F^{(k)}_{n-k} $ 
equals itself, and \eqref{further.calc} is obtained by 
grouping the terms  in the two ways illustrated below
\begin{center}
\begin{tikzpicture}[decoration=brace]

\draw (0,0) node[xshift=1.7cm] { $F^{(k)}_{n}+F^{(k)}_{n-1} + \cdots + F^{(k)}_{n-k-1} + F^{(k)}_{n-k}$};

\draw [
    thick,
    decoration={
        brace,
        raise=0.5cm
    },
    decorate
] (-1.2,0) -- (3.2,0);

\draw [
    thick,
    decoration={
        brace,
        mirror,
        raise=0.5cm
    },
    decorate
] (0,0) -- (4.7,0) ;
\draw (2.3,-1) node {$F^{(k)}_{n}$};

\draw (1,1) node {$F^{(k)}_{n+1}$};
\end{tikzpicture}
\end{center}
\end{proof}

It will be useful to note that $F^{(k)}_{m,n}$ is anti-symmetric with respect to $m$ and $n$.
\begin{lemma}
\begin{equation}
    F^{(k)}_{m,n}= - F^{(k)}_{n,m}.
\end{equation}
\begin{proof}
    This follows from the definition, as exchanging two 
    columns of the determinant reverses the sign.
\end{proof}
\end{lemma}
The next lemma tells us that 
for moderately sized values of $m$ and $n$, the value of 
$F^{(k)}_{m,\,n}$ can be obtained with little work. 

\begin{lemma}\label{lemma.values}
For $k\geq 3$ and $m,n\in \mathbb{Z}$, it holds that
\begin{subequations}
    \begin{empheq}[left={F^{(k)}_{m,n}=\empheqlbrace\,}]{align}
& -F^{(k)}_{n+1} & m=-1, \label{eq:7.1.1}\\
& 0 & 0 \leq m \leq k-3,  \label{eq:7.1.2} \\
& F^{(k)}_{n} & m=k-2 ,  \label{eq:7.1.3} \\
& F^{(k)}_{n} - F^{(k)}_{n+1} & m=k-1, \label{eq:7.1.4} \\
&  2^{a-k}F^{(k)}_{n-k}& k \leq m \leq 2k-2. \label{eq:7.1.5}     \\
& -F^{(k)}_{n} & m=-3 ,  \label{eq:7.1.6} 
    \end{empheq}
\end{subequations}

\end{lemma}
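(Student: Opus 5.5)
Throughout, I read the exponent in \eqref{eq:7.1.5} as $2^{m-k}$. The plan is to fix $n$ and regard $G(m)=F^{(k)}_{m,n}$ as a function of $m$ alone. By Lemma~\ref{co:orthogonal}, $G$ satisfies the $k$-bonacci recursion
\[
G(m)=\sum_{i=1}^{k}G(m-i).
\]
Hence $G$ is completely determined by any $k$ consecutive values. I will compute the $k$ values $G(-1),G(0),\dots,G(k-2)$ directly from the determinant definition \eqref{eq.D}. All the other cases then follow by running the recursion forwards or backwards.

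\emph{Direct evaluations.}
\begin{itemize}
\item For $0\le m\le k-3$, the column of exponent $m$ in $\|\,0,1,\dots,k-3,m,n\,\|$ repeats an earlier column. So the determinant vanishes, which gives \eqref{eq:7.1.2}.
\item For $m=k-2$, the numerator is exactly $\|\,0,1,\dots,k-2,n\,\|$. By the Binet formula \eqref{eq.binet}, $G(k-2)=F^{(k)}_n$, which is \eqref{eq:7.1.3}.
\item For $m=-1$, multiply row $i$ of $\|\,0,\dots,k-3,-1,n\,\|$ by $\phi_i$. This multiplies the determinant by $\prod\phi_i=(-1)^{k+1}$, since the constant term of $x^k-x^{k-1}-\cdots-1$ is $-1$. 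The result is $\|\,1,\dots,k-2,0,n+1\,\|$. Moving the column with exponent $0$ to the front takes $k-2$ transpositions, giving $(-1)^{k}\,\|\,0,1,\dots,k-2,n+1\,\|=(-1)^kV^{(k)}F^{(k)}_{n+1}$. Dividing out, $G(-1)=-F^{(k)}_{n+1}$, which is \eqref{eq:7.1.1}. The only real care needed in the whole proof is this sign bookkeeping.
\end{itemize}

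\emph{Forward recursion.} With the initial block $G(-1),\dots,G(k-2)$ known, the recursion gives
\[
G(k-1)=G(k-2)+\cdots+G(-1)=F^{(k)}_n-F^{(k)}_{n+1},
\]
which is \eqref{eq:7.1.4}. One more step gives
\[
G(k)=G(k-1)+G(k-2)=2F^{(k)}_n-F^{(k)}_{n+1}=F^{(k)}_{n-k},
\]
using \eqref{further.calc}. For the rest I use the standard consequence of the recursion, $G(m+1)=2G(m)-G(m-k)$, which holds because subtracting consecutive recursions telescopes, exactly as in the proof of \eqref{further.calc}. For $k\le m\le 2k-3$ we have $0\le m-k\le k-3$, so $G(m-k)=0$ and $G(m+1)=2G(m)$. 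Induction from $G(k)=F^{(k)}_{n-k}$ then yields $G(m)=2^{m-k}F^{(k)}_{n-k}$ for $k\le m\le 2k-2$, which is \eqref{eq:7.1.5}.

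\emph{Backward step.} For \eqref{eq:7.1.6}, apply the same identity with $m=k-3$:
\[
G(k-2)=2G(k-3)-G(-3).
\]
Since $k\ge3$ we have $0\le k-3\le k-3$, so $G(k-3)=0$. Therefore $G(-3)=-G(k-2)=-F^{(k)}_n$.

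I expect the main obstacle to be only the sign computation in the $m=-1$ case. The remaining steps are mechanical applications of Lemma~\ref{co:orthogonal}, the identity \eqref{further.calc}, and the vanishing of determinants with repeated columns.
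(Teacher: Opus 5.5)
Your proof is correct. For \eqref{eq:7.1.1}--\eqref{eq:7.1.4} and \eqref{eq:7.1.6} it coincides with the paper's argument. The paper does the same sign bookkeeping for $m=-1$, only in the opposite order: it first moves the column with exponent $-1$ to the front, then shifts every exponent by one. It also obtains \eqref{eq:7.1.6} from the same instance $2F^{(k)}_{k-3,n}=F^{(k)}_{k-2,n}+F^{(k)}_{-3,n}$ of \eqref{further.calc} in the first index.

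The genuine difference is in \eqref{eq:7.1.5}. The paper also reaches $F^{(k)}_{k,n}=2F^{(k)}_n-F^{(k)}_{n+1}$, but it does not recognize this as $F^{(k)}_{n-k}$. Instead it does the following:
\begin{itemize}
\item restricts to $k\le n\le 2k-2$;
\item uses the explicit values $F^{(k)}_n=2^{n-k}$ for $k\le n\le 2k-1$, together with the values of $F^{(k)}_j$ at small negative $j$, to verify $F^{(k)}_{m,n}=-2^{n-k}F^{(k)}_{m-k}$ for $-1\le m\le k-2$;
\item transposes this by antisymmetry and extends to all $n$ by the recursion in the second index.
\end{itemize}
Your route uses one application of \eqref{further.calc} to get $G(k)=F^{(k)}_{n-k}$ for every $n$ at once. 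It then doubles, $G(m+1)=2G(m)$, as long as $m-k$ stays in the range where $G$ vanishes. This stays in the single variable $m$ and needs neither antisymmetry nor the negative-index $k$-bonacci values, so it is noticeably shorter. The paper's detour mainly serves to connect the formula with the closed form $F^{(k)}_n=2^{n-k}$ on $[k,2k-1]$.

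Your reading of the exponent as $2^{m-k}$ matches what the paper's proof actually establishes.
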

\begin{proof}
\eqref{eq:7.1.1}\ \ For $m=-1$, we see that $F^{(k)}_{m,\,n}=-F^{(k)}_{n+1}$ holds by
    the following manipulation of the determinants:
\begin{align*}
\|\ 0,\ \cdots,\ k-3,\ -1,\ n\ \| &= \|\ -1,\ 0,\ \cdots,\ k-3,\ n\ \| (-1)^{k-2} \\
&= \|\  0,\ \cdots,\ k-2, \ n+1\ \| (-1)^{k-2} \cdot 
\Big(\,{\textstyle\prod \phi_i}\,\Big)^{-1}\\[1ex]
&= \|\  0,\ \cdots,\ k-2, \ n+1\ \| (-1)^{k-2} (-1)^{k-1}.
\end{align*}

\medskip\noindent

(\ref{eq:7.1.2})\ \ For $0 \leq m \leq k-3$,   $F^{(k)}_{m,\,n}=0$ holds
    because the determinant $$\|\ 0,\ \cdots,\ k-3,\ m,\ n\ \|$$
    has duplicated columns, as $m \in \{ 0, \cdots, k-3\}.$

\medskip\noindent
(\ref{eq:7.1.3})\ \ For $m=k-2$, $F^{(k)}_{m,\,n}=F^{(k)}_{n}$ by 
    the generalized Binet formula \eqref{eq.binet} as 
\[
F^{(k)}_{m,\,n}= \frac{\|\ 0,\ \cdots,\ k-3,\ k-2,\ n\ \|}{\|\ 0,\ \cdots,\ k-2,\ k-1\ \|} = F^{(k)}_{n}
\]
    
\medskip\noindent
(\ref{eq:7.1.4})\ \ For $m=k-1$, $F^{(k)}_{m,\,n}=F^{(k)}_{n} - F^{(k)}_{n+1}$ by the 
    $k$-bonacci recursion 
    \[
    F^{(k)}_{k-1,\,n} = \sum_{i = k-2}^{-1} F^{(k)}_{i,\,n} =
    F^{(k)}_{k-2,\,n} + F^{(k)}_{-1,\,n} = F^{(k)}_{n} - F^{(k)}_{n+1}
    \,.
    \]

\medskip\noindent
(\ref{eq:7.1.5})\ \ For $m=k$, we apply the $k$-bonacci recursion to obtain
    \[
    F^{(k)}_{k,\, n} = \sum_{i=0}^{k-1} F^{(k)}_{i,\, n}
    = F^{(k)}_{k-2,\, n} + F^{(k)}_{k-1,\, n} =
    2 F^{(k)}_{n}  - F^{(k)}_{n+1}  \,.
    \]
    Since $F^{(k)}_n=2^{n-k} $ for $k \leq n \leq 2k-1$, 
    we have $F^{(k)}_{k,\,n}=0$ for $k \leq n \leq 2k-2$. 
Looking at the $k$-bonacci numbers, we see that the $k$-bonacci
    recursion gives us
    \[\begin{array}{ccccc}
    F^{(k)}_{-k} = 0, & \cdots, & F^{(k)}_{-3} = 0, &
    F^{(k)}_{-2} = -1, & F^{(k)}_{-1} = 1.
    \end{array}
    \]
    Putting these facts together, we see that  
    $F^{(k)}_{m,\,n}=-2^{n-k}F^{(k)}_{m-k},$ for 
    $-1\leq m \leq k-2 $ and $ k \leq n \leq 2k-2$. Then by 
    anti-symmetry we have
    $F^{(k)}_{m,\,n}=2^{m-k}F^{(k)}_{n-k}$ for $k \leq m \leq 2k-2$
    and $-1\leq n \leq k-2 $.  Since the last equation holds for $k$
    consecutive values of $n$, $k$-bonacci recursion shows that it holds
    for all $n$.

\medskip\noindent
(\ref{eq:7.1.6})
Since $F^{(k)}_{m,n}$  obeys the $k$-bonacci recursion over $m$ by 
\eqref{co:orthogonal}, we can extend Lemma~\ref{further.calc}  to $m$ to acquire 
\[
2F^{(k)}_{k-3,n} = F^{(k)}_{k-2,n}+F^{(k)}_{-3,n}
\]
holds.
Since $2F^{(k)}_{k-3,n}=0$ by \eqref{eq:7.1.2}, 
we have $F^{(k)}_{-3,n}=-F^{(k)}_{n}$ by \eqref{eq:7.1.3}.
\end{proof}

\begin{remark}\rm 
Our generalization of Vajda's identity is expressed by 
equation \eqref{gen.vajda} and that equation  is complicated. 
In some sense, that outcome was inevitable, since we insisted that the 
left-hand side of the equation mimic Vajda's identity
for $k=2$.
Nevertheless, by making a small change
of notation and thinking of the $F^{(k)}_{m,\,n}$ as the entries
in an array,
we feel we can make the equation easier to comprehend and remember.
Since $k$ is fixed, the  array is 2-dimensional and we suppose $m$ 
increases downward and $n$ increases to
the right, as usual. Setting $m=n+q$, the 
summation on the right-hand side of \eqref{gen.vajda} adds together the $p$ values
of the array that lie on a diagonal going down and to the left  starting
in row $m$ and ending in column $n$. That is,  with the
notation $m=n+q$ the right-hand side of \eqref{gen.vajda} equals
$$
\sum_{i=0}^{p-1} F^{(k)}_{m+i,\,n+p-1-i}
\,.
$$
The diagonal in question determines a rectangle in the array with two 
diagonally opposite
corners $(m,n)$ and $(m+p,n+p)$, where we have taken a little liberty 
with the location of the  bottom right corner, as in Figure~\ref{fig:visual_remark}. 
Now, using the four coordinates of those two 
corners we form a determinant:
\[
\begin{vmatrix}
    F^{(k)}_{n} & F^{(k)}_{n+p} \\[1ex]
    F^{(k)}_{m} & F^{(k)}_{m+p}
\end{vmatrix}
\,,
\]
and we observe that the determinant equals the left-hand side of \eqref{gen.vajda}. 
Thus we can restate \eqref{gen.vajda} as
\begin{equation}\label{gen.vajda.2}
  \begin{vmatrix}
    F^{(k)}_{n} & F^{(k)}_{n+p} \\[1ex]
    F^{(k)}_{m} & F^{(k)}_{m+p}
\end{vmatrix}
= \sum_{i=0}^{p-1} F^{(k)}_{m+i,\,n+p-1-i}
\,.
\end{equation}
\end{remark}
\begin{figure}[ht]
\centering
\begin{center}
\[
\begin{NiceArray}[columns-width=auto,hvlines,first-row,first-col]{||ccc||c}
                                  & n     &               &               & n+p\\
\Hline 
\Hline 
m &                                &               & F^{(k)}_{m,\,n+p-1}         &\\ 
&                                                        & \iddots       &               &\\ 
&                                   F^{(k)}_{m+p-1,\,n}               &               &               &\\ 
\Hline 
\Hline
m+p 
\end{NiceArray}
\]
\caption{Initial Visualization of \eqref{gen.vajda.2}}\label{fig:visual_remark}
\end{center}
\end{figure}

We can also work backwards from our diagonal as follows. If we have a single diagonal element $F^{(k)}_{m,n}$ such that $m=n-1$, then we can see that this is an application of Cassini's formula. 
\begin{equation}\label{gen.ex.cassini}
  \begin{vmatrix}
    F^{(k)}_{n} & F^{(k)}_{n+1} \\[1ex]
    F^{(k)}_{n-1} & F^{(k)}_{n}
\end{vmatrix}
= F^{(k)}_{n-1,\,n}
\,.
\end{equation}
If we have $r$ elements on a diagonal beginning at $(n-1,n)$ and increasing towards the upper right, we have an application of Catalan's formula.
\begin{equation}
\label{gen.ex.catalan_sum}
  \begin{vmatrix}
    F^{(k)}_{n} & F^{(k)}_{n+r} \\[1ex]
    F^{(k)}_{n-1} & F^{(k)}_{n+r-1}
\end{vmatrix}
= \sum_{i=0}^{r-1} F^{(k)}_{n-1+i,\,n+r-1-i} \,.
\end{equation}
or
\begin{equation}
\label{gen.ex.catalan}
  \begin{vmatrix}
    F^{(k)}_{n} & F^{(k)}_{n+r} \\[1ex]
    F^{(k)}_{n-1} & F^{(k)}_{n+r-1}
\end{vmatrix}
=  F^{(k)}_{n-1,\, n} + F^{(k)}_{n-2,\, n+1} +\cdots+
F^{(k)}_{n-r,\, n+r-1}\,.
\end{equation}
Ascending diagonals that do not meet either of these criteria, can only be classified as applications of Vajda.

We can take advantage of the fact that $F^{(k)}_{n,-3}=F^{(k)}_{k-2,n}=F^{(k)}_n$ by \eqref{eq:7.1.2} and \eqref{eq:7.1.6}, in order to see the results on the grid, as the $k$-bonacci numbers are represented on the appropriate row and column, as in Figure~\ref{fig:visual}.   Figure~\ref{fig:visual2}
illustrates the particular case $m=11$, $n=3$, $p=3$, when $k=3$.
\begin{figure}[ht]
\centering
\begin{center}
\[
\begin{NiceArray}[columns-width=auto,hvlines,first-row,first-col]{cc||ccc||c}
&-3 & \\
k-2&                   &               & F^{(k)}_{k-2,\,n}     &               &               & F^{(k)}_{k-2,\,n+p}\\ 
&                   &               &                       & \vdots        &               &\\
\Hline 
\Hline 
& F^{(k)}_{m,\,-3}   & \cdots        &                       &               & F^{(k)}_{m,\,n+p-1}         &\\ 
&                   &               &                       & \iddots       &               &\\ 
&                   &               & F^{(k)}_{m+p-1,\,n}              &               &               &\\ 
\Hline 
\Hline
& F^{(k)}_{m+p,\,-3} &               &                       &               &               &\\ 
\end{NiceArray}
\]
\caption{Final Visualization of \eqref{gen.vajda.2}}\label{fig:visual}
\end{center}
\end{figure}

\begin{figure}[ht]
\centering
\begin{center}
\[
\begin{NiceArray}[columns-width=auto,hvlines,first-row,first-col]{cc||ccc||c}
& \\
&                  &               & F^{(3)}_{1,\,3}=1     &               &               & F^{(3)}_{1,\,6}=7 \\ 
&                   &               &                       & \vdots        &               &\\
\Hline 
\Hline 
&F^{(3)}_{11,\,-3}=149   & \cdots        &                       &               & 53         &\\ 
&                   &               &                       & -88       &               &\\ 
&                   &               & -81               &               &               &\\ 
\Hline 
\Hline
&F^{(3)}_{14,\,-3}=927 &               &                       &               &               &\\ 
\end{NiceArray}
\]
\caption{$\begin{vmatrix} 1 & 7 \\ 149 & 927 \end{vmatrix}=1\cdot927-7\cdot149=-116=53-88-81$.}\label{fig:visual2}
\end{center}
\end{figure}

%
%
\section{Recovering the classical  identities}
To show that our generalization of Vajda's identity reduces
to the original when $k=2$, we need to show that
the right-hand side of \eqref{gen.vajda} simplifies
to $(-1)^{n-1}\,F_p\, F_q$ in that case.
There are two results needed to accomplish that goal.
The first  is the 
following theorem---which may be interesting in its own right.
\begin{theorem} \label{th.fib.form} For $p\geq 0$ and $q\in\mathbb{Z}$, it holds that
\begin{equation}\label{fib.form}
    F_p\,F_q = \sum_{i=0}^{p-1} (-1)^i\, F_{p-1-2i+q}
    \,.
\end{equation}
\end{theorem}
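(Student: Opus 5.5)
We prove \eqref{fib.form} by induction on $p$, with $q\in\mathbb{Z}$ arbitrary. Here $F_\ell$ denotes the ordinary Fibonacci numbers, extended to negative indices by the recursion, so that $F_{-r}=(-1)^{1-r}F_r$.

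\emph{Base cases.} For $p=0$ both sides vanish, since $F_0=0$ and the sum is empty. For $p=1$ the right-hand side is the single term $F_{q}$, which equals $F_1F_q$.

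\emph{Inductive step.} It is cleanest to step $p$ by $2$. Let $S_p(q)$ denote the right-hand side of \eqref{fib.form}. Reindexing gives
\[
S_{p+2}(q)=F_{p+1+q}-F_{p-1+q}+\sum_{i=2}^{p+1}(-1)^iF_{p+1-2i+q}=F_{p+q}+S_p(q),
\]
because the shift $i\mapsto i+1$ turns the tail into $S_p(q)$, and $F_{p+1+q}-F_{p-1+q}=F_{p+q}$. So it remains to verify the identity
\[
F_{p+2}F_q=F_pF_q+F_{p+q}.
\]
Since $F_{p+2}-F_p=F_{p+1}$, this is equivalent to $F_{p+1}F_q=F_{p+q}$, and that is false in general. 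So the step-by-$2$ reduction needs correcting. Redoing the reindexing carefully: the first two terms of $S_{p+2}(q)$ are $F_{p+1+q}-F_{p-1+q}$, and the remaining terms $i=2,\dots,p+1$ equal $\sum_{j=0}^{p-1}(-1)^jF_{p-1-2j+(q-2)}=S_p(q-2)$. Hence
\[
S_{p+2}(q)=F_{p+q}+S_p(q-2).
\]
By induction, $S_p(q-2)=F_pF_{q-2}$, so we need
\[
F_{p+2}F_q-F_pF_{q-2}=F_{p+q}.
\]

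\emph{The main obstacle} is this last identity, a standard product identity. One route is Binet's formula. With $\alpha\beta=-1$,
\[
5\bigl(F_{p+2}F_q-F_pF_{q-2}\bigr)=\alpha^{p+q}\bigl(\alpha^2-\alpha^{-2}\bigr)+\beta^{p+q}\bigl(\beta^2-\beta^{-2}\bigr)-(\alpha\beta)^q\bigl(\alpha^{p+2}\beta^{-q}\cdots\bigr),
\]
where the cross terms are $-\alpha^{p+2}\beta^q-\beta^{p+2}\alpha^q+\alpha^p\beta^{q-2}+\beta^p\alpha^{q-2}$. The cross terms cancel because $\alpha^{p+2}\beta^{q}=\alpha^p\beta^{q-2}(\alpha\beta)^2=\alpha^p\beta^{q-2}$, and symmetrically for the other pair. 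Then $\alpha^2-\alpha^{-2}=\alpha^2-\beta^2=\sqrt5$ and $\beta^2-\beta^{-2}=-\sqrt5$, which gives $\sqrt5(\alpha^{p+q}-\beta^{p+q})=5F_{p+q}$.

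Alternatively, a purely combinatorial route uses the addition formula $F_{p+q}=F_{p+1}F_q+F_pF_{q-1}$ (valid for all integers). Substituting $F_{p+2}=F_{p+1}+F_p$ and $F_{q-2}=F_q-F_{q-1}$ shows the left side is
\[
F_{p+1}F_q+F_pF_q-F_pF_q+F_pF_{q-1},
\]
which matches the addition formula. I would use this second route since it avoids $\sqrt5$, citing or briefly deriving the addition formula by induction on $p$.

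With the two base cases $p=0,1$ and the step $p\to p+2$ holding for every $q$, the theorem follows for all $p\ge0$.
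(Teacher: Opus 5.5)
Your argument is correct, though it follows a different route from the paper's. Before writing it up, delete the false first reindexing $S_{p+2}(q)=F_{p+q}+S_p(q)$ and the garbled Binet display; neither belongs in the final proof.

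\textbf{The paper's route.} It inducts in steps $p\to p+1$. It writes $F_{p+1}F_q=F_pF_q+F_{p-1}F_q$ and applies the hypothesis at $p$ and $p-1$ with the \emph{same} $q$. It then merges the two alternating sums term by term via $F_j+F_{j-1}=F_{j+1}$. Finally it handles the single leftover term $(-1)^{p-1}F_{q-p+1}$ with $F_{q-p+1}=F_{q-p+2}-F_{q-p}$. Nothing beyond the Fibonacci recursion is used.

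\textbf{Your route.} You induct in steps $p\to p+2$ and peel the first two terms off the sum, which gives the recurrence $S_{p+2}(q)=F_{p+q}+S_p(q-2)$. You apply the hypothesis at the shifted argument $q-2$, so your induction statement must be ``for all $q\in\mathbb{Z}$'', as you correctly set it up. This reduces the theorem to the product identity $F_{p+2}F_q-F_pF_{q-2}=F_{p+q}$, which in turn rests on the addition formula $F_{p+q}=F_{p+1}F_q+F_pF_{q-1}$.

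\textbf{What each buys.} Your decomposition makes it transparent why the alternating sum collapses to a product: each double step contributes exactly $F_{p+2}F_q-F_pF_{q-2}$. It is, however, less self-contained. The addition formula is a separate lemma needing its own induction on $p$ (valid for all integer $q$ via the extended recursion) or Binet's formula. The paper's one-step induction avoids any auxiliary identity, at the cost of slightly fiddlier bookkeeping of the boundary term.
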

\begin{proof}
In case $p=0$, both sides of \eqref{fib.form} vanish,
and when $p=1$, both sides equal $F_q$. 

Arguing by induction, we consider $p+1$, for $p\geq1$.
We have
\begin{align*}
    F_{p+1}\, F_q &= F_p\, F_q + F_{p-1}\, F_q 
    = \sum_{i=0}^{p-1} (-1)^i\, F_{p-1-2i+q}
    + \sum_{i=0}^{p-2} (-1)^i\, F_{p-2-2i+q}\\[1ex]
    &= \sum_{i=0}^{p-2} (-1)^i\,(F_{p-1-2i+q}
    +F_{p-2-2i+q} ) + (-1)^{p-1}\, F_{-(p-1)+q}\\[1ex]
    &= \sum_{i=0}^{p-2} (-1)^i\,F_{p-2i+q}
    + (-1)^{p-1}\, F_{-(p-1)+q}
\end{align*}
and finally we observe that
\[
(-1)^{p-1}\, F_{-(p-1)+q} = (-1)^{p-1} F_{(p+1)-1-2(p-1)+q}
+ (-1)^p\, F_{(p+1)-1-2(p-1)+q}
\,,
\]
completing the proof.
\end{proof}

The second result we need is the following simple identity:
\begin{lemma}\label{eval.two}
For $a,b\in \mathbb{Z}$, it holds that
$F^{(2)}_{a,b}= (-1)^a\,F_{b-a}$.
\end{lemma}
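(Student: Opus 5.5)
For $k=2$ the determinant in \eqref{eq.D} has no leading columns $0,1,\dots,k-3$, so it is just a $2\times2$ determinant. Taking $\phi_1=\alpha$, $\phi_2=\beta$ to be the roots of $x^2-x-1$, we get
\[
F^{(2)}_{a,b}=\frac{\|\,a,\ b\,\|}{\|\,0,\ 1\,\|}=\frac{\alpha^a\beta^b-\alpha^b\beta^a}{\beta-\alpha}.
\]
The plan is to pull out the common factor $(\alpha\beta)^a$ and use the classical Binet formula $F_m=(\alpha^m-\beta^m)/(\alpha-\beta)$, valid for all integers $m$.

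First, since $b\ge a$ is not assumed, I write
\[
\alpha^a\beta^b-\alpha^b\beta^a=(\alpha\beta)^a\left(\beta^{b-a}-\alpha^{b-a}\right),
\]
which holds for all integers $a,b$ because $\alpha\beta=-1$ is invertible. Dividing by $\beta-\alpha$ gives
\[
F^{(2)}_{a,b}=(\alpha\beta)^a\,\frac{\alpha^{b-a}-\beta^{b-a}}{\alpha-\beta}=(-1)^a F_{b-a}.
\]
The last step uses $\alpha\beta=-1$ and Binet's formula for the integer index $b-a$, which may be negative.

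The only care needed is the orientation convention. With rows indexed by $\phi_i$ and columns by the exponents as in \eqref{eq:heineman}, one has $\|\,0,1\,\|=\beta-\alpha$ and $\|\,a,b\,\|=\alpha^a\beta^b-\beta^a\alpha^b$. These signs are consistent with each other, and the result does not depend on which root is labelled $\phi_1$, since swapping $\alpha$ and $\beta$ negates both numerator and denominator.

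As a sanity check against earlier results, Lemma~\ref{lemma.values} is stated only for $k\ge3$, but the analogue of \eqref{eq.binet} for $k=2$ gives $F^{(2)}_{0,b}=\|\,0,b\,\|/\|\,0,1\,\|=F_b$, which matches the formula. Antisymmetry also checks out: $(-1)^bF_{a-b}=(-1)^b(-1)^{b-a+1}F_{b-a}=-(-1)^aF_{b-a}$, using $F_{-r}=(-1)^{1-r}F_r$.

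I expect no real obstacle here; the one thing to get right is the sign bookkeeping just described.
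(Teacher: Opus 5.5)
Your proof is correct and is essentially the paper's argument: the paper factors $(\phi_1\phi_2)^a$ out of the $2\times 2$ determinant $\|\,a,\ b\,\|$ to obtain $(\phi_1\phi_2)^a\,\|\,0,\ b-a\,\|$. It then uses $\phi_1\phi_2=-1$ together with the Binet formula for $F_{b-a}$, exactly as you do.
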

\begin{proof}
We observe that when 
$\phi_1$ and $\phi_2$ are the roots of the
Fibonacci characteristic polynomial, the equation
\[
    \begin{vmatrix}
    \phi_1^{a} & \phi_1^b \\
    \phi_2^{a} & \phi_2^b
    \end{vmatrix}=(\phi_1\phi_2)^{a} \begin{vmatrix}
    \phi_1^{0} & \phi_1^{b-a} \\
    \phi_2^{0} & \phi_2^{b-a}
    \end{vmatrix}
\] 
implies $F^{(2)}_{a,b}= (-1)^a\,F_{b-a}$.
\end{proof}

\begin{corollary}
When $k=2$, equation \eqref{gen.vajda} reduces to Vajda's identity
\[F_{n}F_{n+p+q}-F_{n+p}F_{n+q} 
    = (-1)^{n-1}\, F_p\, F_q
    \,.
\]
Likewise,  \eqref{gen.vajda}  reduces to the 
identity of Cassini and to the identity of  Catalan
with the appropriate choices of $p$ and $q$.
\end{corollary}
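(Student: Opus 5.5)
We specialize Theorem~\ref{th:vajda} to $k=2$ and rewrite its right-hand side with Lemma~\ref{eval.two} and Theorem~\ref{th.fib.form}. Take $p\ge 0$; the case $p<0$ follows from the symmetry in the Remark after Theorem~\ref{lemma:second_}, since the left-hand side is unchanged and the target $(-1)^{n-1}F_pF_q$ transforms correctly. Indeed, under $n\mapsto n+p+q$, $p\mapsto -p$, $q\mapsto -q$ we have $(-1)^{n+p+q-1}F_{-p}F_{-q}=(-1)^{n+p+q-1}(-1)^{p+1}(-1)^{q+1}F_pF_q=(-1)^{n-1}F_pF_q$, using $F_{-r}=(-1)^{1-r}F_r$.

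With $k=2$, the $F^{(2)}_\ell$ are the Fibonacci numbers by \eqref{eq.binet}. Each summand on the right of \eqref{gen.vajda} is $F^{(2)}_{a,b}$ with $a=n+q+i$ and $b=n+p-1-i$. Lemma~\ref{eval.two} gives
\[
F^{(2)}_{n+q+i,\,n+p-1-i}=(-1)^{n+q+i}\,F_{p-1-2i-q}.
\]
The right-hand side is therefore $(-1)^{n+q}\sum_{i=0}^{p-1}(-1)^iF_{p-1-2i-q}$. Applying Theorem~\ref{th.fib.form} with $q$ replaced by $-q$ (allowed since $q\in\mathbb{Z}$ is arbitrary) turns this into $(-1)^{n+q}F_pF_{-q}$. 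Finally, $F_{-q}=(-1)^{1-q}F_q$ gives $(-1)^{n+q}(-1)^{1-q}F_pF_q=(-1)^{n+1}F_pF_q=(-1)^{n-1}F_pF_q$, which is Vajda's identity \eqref{eq:vajda}.

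For Catalan, set $p=r$ and $q=-r$. Vajda then yields $F_n^2-F_{n+r}F_{n-r}=(-1)^{n-1}F_rF_{-r}=(-1)^{n-1}(-1)^{1-r}F_r^2=(-1)^{n-r}F_r^2$, which is \eqref{eq:catalan}. Setting $r=1$ gives Cassini's \eqref{eq:cassini}.

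The main thing to get right is the sign bookkeeping: the index order in $F^{(2)}_{a,b}$ (the sign is $(-1)^a$, with $a$ the first index, and the argument is $b-a$), the substitution $q\to -q$ in Theorem~\ref{th.fib.form}, and the negative-index formula. The algebra itself is otherwise routine.
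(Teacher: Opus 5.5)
Your proof is correct and follows the paper's argument essentially step for step: Lemma~\ref{eval.two} turns each summand into $(-1)^{n+q+i}F_{p-1-2i-q}$, and Theorem~\ref{th.fib.form} with $q\to -q$ collapses the sum to $(-1)^{n+q}F_pF_{-q}=(-1)^{n-1}F_pF_q$; the case $p<0$ is handled by the substitution $n\mapsto n+p+q$, $p\mapsto -p$, $q\mapsto -q$, and Catalan and Cassini follow by taking $p=r$, $q=-r$ and then $r=1$. Your sign bookkeeping is right throughout, and your substitution $p'=-p$ is the intended one; the paper's text has the misprint $p'=-1$ at that point.
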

\begin{proof}
For $p \geq 0$, using Lemma~\ref{eval.two} and Theorem~\ref{th.fib.form}, we see that 
when $k=2$,
\eqref{gen.vajda} becomes
\begin{align} \nonumber
F_{n}F_{n+p+q}-F_{n+p}F_{n+q} 
&=  
(-1)^{n+q}\sum_{i=0}^{p-1} 
(-1)^{i}
F_{p-1-q-2i} \nonumber\\[1ex] 
\nonumber
&=  (-1)^{n+q} \,F_{p} \, F_{-q} \\[1ex]
\label{almost.there}
&=  (-1)^{n-1}\ \,F_{p} \, F_{q}
\,,
\end{align}
where we have used the fact that $F_{-q} = (-1)^{q-1}\,F_q$.

When $p<0$, we define $n'=n+p+q, p'=-1$, and $q'=-q$. Then
\begin{align*} \nonumber
F_{n}F_{n+p+q}-F_{n+p}F_{n+q} 
&=F_{n'}F_{n'+p'+q'}-F_{n'+p'}F_{n'+q'} \\
&=(-1)^{n'-1} \,F_{p'} \, F_{q'} \\
&= \left[(-1)^{n-1}(-1)^{p}(-1)^{q}\right]\left[(-1)^{p-1}F_{p}\right]\left[(-1)^{q-1}F_{q}\right] \\
&= (-1)^{n-1}\ \,F_{p} \, F_{q}
\,,
\end{align*}
As mentioned in the introduction, Catalan's identity is a special case of Vajda's identity where $p=r$ and $q=-r$. Finally, Cassini's identity is the case $r=1$ of Catalan's identity.
\end{proof}

%
%

\section{The Tribonacci numbers}

Lemma~\ref{eval.two} allows us to express the quantities
$F^{(2)}_{a,b}$ in terms of the Fibonacci numbers.
The next lemma gives a similar way to express the quantities
$F^{(3)}_{a,b}$ in terms of the Tribonacci numbers.

\begin{theorem}\label{eval.three}
For $a,b\in \mathbb{Z}$, it holds that
\begin{equation} \label{tri.goal}
F^{(3)}_{a,b}= F^{(3)}_{a-b} \, F^{(3)}_{1-b}
- F^{(3)}_{a-b+1} \, F^{(3)}_{-b}
\,.
\end{equation}
\end{theorem}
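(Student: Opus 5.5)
The plan is to reduce \eqref{tri.goal} to an identity for $F^{(3)}_{m,n}$ that is linear in one index. That identity can then be settled by the $k$-bonacci recursion (Lemma~\ref{co:orthogonal}) together with the special values in Lemma~\ref{lemma.values}. Throughout, $\phi_1,\phi_2,\phi_3$ are the roots of $x^3-x^2-x-1$. The key extra fact is that $\phi_1\phi_2\phi_3=1$ (the constant term is $-1$ and the degree is odd). I will also keep in mind the paper's indexing from \eqref{eq.binet}: $F^{(3)}_n=\|0,1,n\|/\|0,1,2\|$, so $F^{(3)}_0=F^{(3)}_1=0$, $F^{(3)}_2=1$, and $F^{(3)}_{-1}=1$.

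\textbf{Step 1 (a shift symmetry).} In $\|0,a,b\|$, factor $\phi_i^{b}$ out of row $i$. Since $\prod\phi_i=1$, this gives $\|0,a,b\|=\|-b,\,a-b,\,0\|$. Moving the last column to the front is a cyclic permutation of three columns, hence even, so $\|-b,a-b,0\|=\|0,-b,a-b\|$. Dividing by $V^{(3)}$ yields
\[
F^{(3)}_{a,b}=F^{(3)}_{-b,\,a-b}.
\]
Writing $m=-b$ and $n=a-b$, the claim \eqref{tri.goal} becomes
\[
F^{(3)}_{m,n}=F^{(3)}_{n}F^{(3)}_{m+1}-F^{(3)}_{n+1}F^{(3)}_{m}.
\]

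\textbf{Step 2 (reduce to three initial values).} Fix $m$ and regard both sides as functions of $n$. The left side satisfies the Tribonacci recursion in $n$ by Lemma~\ref{co:orthogonal}. The right side is a fixed linear combination of $F^{(3)}_n$ and $F^{(3)}_{n+1}$, so it satisfies the same recursion. Since both sides obey a third-order linear recursion running in both directions, it suffices to check agreement at three consecutive values, say $n=-1,0,1$.

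\textbf{Step 3 (check the three values).} Each value of $F^{(3)}_{m,n}$ needed here is obtained from antisymmetry and Lemma~\ref{lemma.values} with $k=3$.
\begin{itemize}
\item For $n=0$: $F^{(3)}_{m,0}=-F^{(3)}_{0,m}=0$ by \eqref{eq:7.1.2}. The right side is $F^{(3)}_0F^{(3)}_{m+1}-F^{(3)}_1F^{(3)}_m=0$.
\item For $n=1$: $F^{(3)}_{m,1}=-F^{(3)}_{1,m}=-F^{(3)}_m$ by \eqref{eq:7.1.3}. The right side is $F^{(3)}_1F^{(3)}_{m+1}-F^{(3)}_2F^{(3)}_m=-F^{(3)}_m$.
\item For $n=-1$: $F^{(3)}_{m,-1}=-F^{(3)}_{-1,m}=F^{(3)}_{m+1}$ by \eqref{eq:7.1.1}. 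The right side is $F^{(3)}_{-1}F^{(3)}_{m+1}-F^{(3)}_0F^{(3)}_m=F^{(3)}_{m+1}$.
\end{itemize}
All three agree, which completes the argument.

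The only delicate points are bookkeeping. One must get the column-permutation sign right in Step 1, and one must use the paper's shifted indexing of the small Tribonacci values ($F^{(3)}_1=0$, not $1$). With the standard indexing the check at $n=0$ fails, so this is the step where an error is most likely.
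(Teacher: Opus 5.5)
Your proof is correct. Step 1 is exactly the paper's shift argument: $\prod\phi_i=1$ together with an even column permutation gives $F^{(3)}_{a,b}=F^{(3)}_{-b,\,a-b}$.

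You differ from the paper in how you establish the remaining identity $F^{(3)}_{m,n}=F^{(3)}_nF^{(3)}_{m+1}-F^{(3)}_{n+1}F^{(3)}_m$. The paper simply quotes it as \eqref{gen.vajda1}, the case $p=1$, $q=m-n$ of the $k$-bonacci Vajda identity. That identity rests on Theorem~\ref{lemma:second_}, and hence on the Laplace expansions, the Binet--Cauchy formula and the Vandermonde minor identity \eqref{eq:tricky:four}. You instead prove it directly. Both sides satisfy the Tribonacci recursion in $n$, and you check agreement at $n=-1,0,1$ using antisymmetry and Lemma~\ref{lemma.values}.

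The paper's route makes the theorem a two-line corollary of a general determinantal identity. That identity holds for arbitrary $\phi_i$ and all $p$. Your route is self-contained and elementary: it needs only multilinearity (Lemma~\ref{co:orthogonal}) and the trivially computed boundary values $F^{(3)}_{0,m}$, $F^{(3)}_{1,m}$, $F^{(3)}_{-1,m}$.

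Your argument also extends. Checking $n=-1,0,\dots,k-2$ in the same way proves Theorem~\ref{th:vajda1} for general $k$. Moreover, with $g(i)=F^{(k)}_{n+p-i}F^{(k)}_{n+q+i}$ that theorem gives $F^{(k)}_{n+q+i,\,n+p-1-i}=g(i+1)-g(i)$. Summing over $0\le i\le p-1$ telescopes to the full identity \eqref{gen.vajda}. So for the $k$-bonacci numbers your method could replace the Binet--Cauchy proof entirely. The price is that it relies on the specific initial values $0,\dots,0,1$ of the sequence rather than on a general identity for Vandermonde-type determinants.
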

\begin{proof}
    By \eqref{gen.vajda1}, we have
\[
F^{(3)}_{-b,a-b}  =   F^{(3)}_{a-b}F^{(3)}_{1-b} -F^{(3)}_{a-b+1}F^{(3)}_{-b} \,.
\]
It then suffices to show $F^{(3)}_{a,b}=F^{(3)}_{-b,a-b}$.
By letting the $\phi_i$  be the roots of the characteristic
polynomial for the Tribonacci recursion, we observe that 
$ \prod \phi_i=1 $
and thus we have
\begin{equation*}
    V^{(3)}\,F_{ -b,a-b}^{(3)} = \|\ 0,\ -b,\ a-b\ \| 
    =  \|\ 0,\ a,\ b\ \| \cdot 
    \Big(\,{\textstyle\prod \phi_i}\,\Big)^{-b}(-1)^2\\[1ex]
    =   V^{(3)}\,F_{ a,b}^{(3)} \,.
\end{equation*}
\end{proof}

\begin{corollary}
\begin{equation}\label{tri.step.1}
F_{ n-1, n}^{(3)} = F_{ 1-n}^{(3)} 
\,.
\end{equation}
\end{corollary}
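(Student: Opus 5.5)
The plan is to apply Theorem~\ref{eval.three} with $a=n-1$ and $b=n$. This gives
\[
F^{(3)}_{n-1,n}= F^{(3)}_{-1}\,F^{(3)}_{1-n} - F^{(3)}_{0}\,F^{(3)}_{-n}.
\]
The proof then reduces to knowing the two small Tribonacci values $F^{(3)}_{-1}$ and $F^{(3)}_{0}$.

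These values come from the list in the proof of Lemma~\ref{lemma.values}, which for $k=3$ gives $F^{(3)}_{-3}=0$, $F^{(3)}_{-2}=-1$ and $F^{(3)}_{-1}=1$. The value $F^{(3)}_0=0$ follows from the Binet formula \eqref{eq.binet}: with $n=0$ the numerator determinant $\|\,0,1,0\,\|$ has two equal columns. One can also run the recursion backwards from $F^{(3)}_1=F^{(3)}_2=1$, $F^{(3)}_3=2$, since $F^{(3)}_3=F^{(3)}_2+F^{(3)}_1+F^{(3)}_0$ forces $F^{(3)}_0=0$. Substituting, the second term vanishes and the first equals $F^{(3)}_{1-n}$, which proves \eqref{tri.step.1}.

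A second route avoids Theorem~\ref{eval.three}. Work with $\|\,0,n-1,n\,\|$ and use $\prod\phi_i=1$ for the Tribonacci roots. Factoring $\phi_i^{n-1}$ out of each row gives $\|\,0,n-1,n\,\|=\|\,1-n,0,1\,\|$, and two column swaps give $\|\,0,1,1-n\,\|$. Dividing by $V^{(3)}$ and applying \eqref{eq.binet} yields $F^{(3)}_{1-n}$ directly. I would mention this route as a cross-check.

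The only place needing care is the sign bookkeeping. In the second route one must verify that the column permutation taking $(1-n,0,1)$ to $(0,1,1-n)$ is even. It is a $3$-cycle, so it is. In the first route one must use the correct values of $F^{(3)}_{-1}$ and $F^{(3)}_0$. Beyond that the argument is immediate.
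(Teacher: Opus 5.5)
Your main route is exactly the paper's proof: put $a=n-1$, $b=n$ in Theorem~\ref{eval.three} and use $F^{(3)}_{-1}=1$, $F^{(3)}_0=0$. Your determinant cross-check is also valid; it is the same row-factoring trick the paper uses for Lucas sequences, and it bypasses Theorem~\ref{eval.three} altogether.

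One slip in an aside: under the paper's normalization \eqref{eq.binet} the Tribonacci values are $F^{(3)}_1=0$, $F^{(3)}_2=F^{(3)}_3=1$, $F^{(3)}_4=2$. Your backward-recursion check starts from shifted values, and with those values the recursion would give $F^{(3)}_{-1}=0$, contradicting the value you use. This does not affect the proof, because your Binet argument for $F^{(3)}_0=0$ is correct and suffices on its own.
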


\begin{proof}
    Since $F^{(3)}_{-1} = 1$ and $F^{(3)}_0=0$ hold, we 
see that \eqref{tri.goal}  implies \eqref{tri.step.1} when \mbox{$a=n-1,b=n$}.
\end{proof}

\begin{corollary}[Cassini's identity for the 
Tribonacci numbers]
It holds that
\begin{equation}
    \label{eq:3}
    \left(F^{(3)}_{n}\right)^{2}-F^{(3)}_{n-1}\,F^{(3)}_{n+1}=F^{(3)}_{1-n} \,.
\end{equation}
\end{corollary}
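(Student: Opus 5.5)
This follows by combining Cassini's identity for the $k$-bonacci numbers, Corollary~\ref{th:cassini}, with the evaluation \eqref{tri.step.1} of the preceding corollary. Take $k=3$ in \eqref{gen.cassini}:
\[
\left(F^{(3)}_{n}\right)^{2}-F^{(3)}_{n-1}\,F^{(3)}_{n+1}=F^{(3)}_{n-1,\,n}.
\]
Then replace the right-hand side by $F^{(3)}_{1-n}$ using \eqref{tri.step.1}. This gives \eqref{eq:3} for every integer $n$.

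Before relying on \eqref{gen.cassini} for arbitrary $n$, including negative ones, I will trace it back to its source. Corollary~\ref{th:catalan} with $r=1$ is Theorem~\ref{th:vajda} with $p=1$ and $q=-1$. That theorem in turn rests on Theorem~\ref{lemma:second_} over the field $\mathbb{C}$, where the three Tribonacci roots are distinct and nonzero. Negative exponents are therefore harmless, and the division by $V^{(3)}\neq 0$ is legitimate. With $p=1$, the sum on the right of \eqref{gen.vajda} has the single term $i=0$, namely $F^{(3)}_{n-1,\,n}$, which matches the sign and ordering of the indices above.

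Corollary~\ref{tri.step.1} is derived from Theorem~\ref{eval.three}, which cites an identity \eqref{gen.vajda1} expressing $F^{(3)}_{m,n}$ as a $2\times2$ combination of Tribonacci numbers. If that identity were unavailable, I would check \eqref{tri.step.1} directly instead. The Tribonacci roots satisfy $\prod\phi_i=1$, so factoring $\phi_i^{-n}$ out of each row gives
\[
\|\,0,\ n-1,\ n\,\| = \|\,-n,\ -1,\ 0\,\|.
\]
Reordering the columns to $(0,-n,-1)$ is a cyclic permutation of three columns, hence even, so this equals $\|\,0,\ -n,\ -1\,\|$. Finally, \eqref{eq:7.1.1} with $m=-1$, together with antisymmetry, gives
\[
F^{(3)}_{-n,\,-1} = -F^{(3)}_{-1,\,-n} = F^{(3)}_{1-n}.
\]
The only real risk in this fallback is tracking the permutation signs, and that is the step I would double-check.
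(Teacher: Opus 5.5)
Your proposal is correct and is exactly the paper's argument: specialize \eqref{gen.cassini} to $k=3$ and substitute \eqref{tri.step.1} for the right-hand side. Your fallback check of \eqref{tri.step.1} is also valid. It scales row $i$ by $\phi_i^{-n}$ using $\prod\phi_i=1$, applies the even cyclic column permutation, and then uses \eqref{eq:7.1.1} with antisymmetry. It has the minor merit of avoiding the paper's forward reference to \eqref{gen.vajda1}.
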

\begin{proof}
    This is an immediate consequence of equations \eqref{gen.cassini} and \eqref{tri.step.1}.
\end{proof}
\begin{remark}\rm
Equation \eqref{eq:3} 
confirms the conjecture in 
\cite{Shannon2024Simson} that the
right-hand side of the Tribonacci generalization of Cassini's 
(also called Simpson's) identity is $F^{(3)}_{1-n} $, which
is called  the 
reflected Tribonacci numbers in \cite[A057597]{oeis}.
\end{remark}

We know that the quantities $F^{(3)}_{a,\,b}$ satisfy the Tribonacci
recursion in each index separately. The next corollary shows us that
there is also a recursion that applies when both indices increase simultaneously.
\begin{corollary}
For $a,b\in \mathbb{Z}$, it holds that
\begin{equation}\label{trib.anti}
F^{(3)}_{a,\,b} = - F^{(3)}_{a-1,\,b-1} - F^{(3)}_{a-2,\,b-2}
+ F^{(3)}_{a-3,\,b-3}
\,.
\end{equation}
\end{corollary}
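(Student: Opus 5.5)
The plan is to reduce \eqref{trib.anti} to a one-variable recursion by way of Theorem~\ref{eval.three}. The key observation is that the right-hand side of \eqref{tri.goal} depends on $a$ only through the difference $d=a-b$. Simultaneously decreasing both indices by $j$ leaves $d$ unchanged. So if I fix $d$ and set
\[
c_b \;=\; F^{(3)}_{b+d,\,b} \;=\; F^{(3)}_{d}\,F^{(3)}_{1-b} - F^{(3)}_{d+1}\,F^{(3)}_{-b},
\]
then the claim becomes $c_b = -c_{b-1} - c_{b-2} + c_{b-3}$ for all $b\in\mathbb{Z}$.

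Next I show that the reflected sequence $x_j = F^{(3)}_{-j}$ satisfies this same recursion. The Tribonacci recursion $F^{(3)}_{m+3} = F^{(3)}_{m+2}+F^{(3)}_{m+1}+F^{(3)}_{m}$ holds for all integers $m$, since the sequence is extended to negative indices by the recursion itself. Solving it for the lowest-index term gives
\[
F^{(3)}_{m} = F^{(3)}_{m+3} - F^{(3)}_{m+2} - F^{(3)}_{m+1}.
\]
Taking $m=-j$ yields
\[
x_j = x_{j-3} - x_{j-2} - x_{j-1},
\]
which is exactly the recursion required of $c_b$.

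Finally, $c_b = F^{(3)}_{d}\,x_{b-1} - F^{(3)}_{d+1}\,x_b$. This is a linear combination of two shifts of $(x_j)$, and its coefficients do not depend on $b$. The set of sequences satisfying a fixed linear recursion is closed under shifts and under constant linear combinations, so $(c_b)$ satisfies $c_b=-c_{b-1}-c_{b-2}+c_{b-3}$. Substituting back $a=b+d$ gives \eqref{trib.anti}.

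The only point needing care is the bookkeeping of signs and indices when reversing the recursion. As an alternative check, I could argue directly from determinants. Because $\prod\phi_i=1$, we have $\|\,0,a,b\,\| = \|\,-b,\,a-b,\,0\,\|$ up to a sign that is even. The two-index shift therefore becomes a shift in a single column exponent, where the reversed characteristic relation $\phi^{-3} = \phi^{0}-\phi^{-1}-\phi^{-2}$ applies. I do not expect either step to be a genuine obstacle.
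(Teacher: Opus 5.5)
Your proof is correct and is essentially the paper's argument. The paper expands each of the three terms via \eqref{tri.goal}, using that $a-b$ is unchanged by the simultaneous shift, and then applies the Tribonacci recursion to the factors $F^{(3)}_{j-b}$; this is exactly your observation that $x_j=F^{(3)}_{-j}$ satisfies the reversed recursion, which you package as closure of the solution space under shifts and constant linear combinations. Your abstract version also avoids a sign slip in the paper's displayed middle line, where the coefficient of $F^{(3)}_{a-b+1}$ should be $+\bigl(F^{(3)}_{1-b}+F^{(3)}_{2-b}-F^{(3)}_{3-b}\bigr)$.
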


\begin{proof}
Taking advantage of the fact that $(a-i)-(b-i)= a-b$ for $i=1,2,3$,
we expand the right-hand side of \eqref{trib.anti} using \eqref{tri.goal}
and then use the Tribonacci recursion
to obtain
\begin{align*}
& - F^{(3)}_{a-1,\,b-1} - F^{(3)}_{a-2,\,b-2}
+ F^{(3)}_{a-3,\,b-3} \\[1ex]
&\qquad = -F^{(3)}_{a-b} \, F^{(3)}_{2-b}
+ F^{(3)}_{a-b+1} \, F^{(3)}_{1-b}
- F^{(3)}_{a-b} \, F^{(3)}_{3-b}+ F^{(3)}_{a-b+1}\, F^{(3)}_{2-b} 
+ F^{(3)}_{a-b} \,F^{(3)}_{4-b} - F^{(3)}_{a-b+1}\, F^{(3)}_{3-b}
\\[1ex]
&\qquad =  F^{(3)}_{a-b} \, \Big( F^{(3)}_{4-b}-F^{(3)}_{3-b}-F^{(3)}_{2-b} \Big)
- F^{(3)}_{a-b+1}\,\Big( -F^{(3)}_{3-b}+ F^{(3)}_{2-b}+ F^{(3)}_{1-b}\Big) \\[1ex]
&\qquad =  F^{(3)}_{a-b} \, F^{(3)}_{1-b} 
- F^{(3)}_{a-b+1}F^{(3)}_{-b} = F^{(3)}_{a,\,b}
\end{align*}
\end{proof}

%
%

\section{Further Results for Calculating}

The first two results in this section show how the value of 
$F^{(k)}_{m,\,n}$ can be expressed in terms of $k$-bonacci numbers. 

\begin{theorem}\label{th:vajda1}  For integers $m$  and $n$,
it holds that
\begin{equation}\label{gen.vajda1}
F^{(k)}_{m,n}  =   F^{(k)}_{n}F^{(k)}_{m+1} -F^{(k)}_{n+1}F^{(k)}_{m} \,.
\end{equation}
\end{theorem}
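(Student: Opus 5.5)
We want $F^{(k)}_{m,n} = F^{(k)}_{n}F^{(k)}_{m+1} - F^{(k)}_{n+1}F^{(k)}_{m}$. The plan is to obtain this as a one-term instance of Theorem~\ref{th:vajda} and then use the two-index recursion to pass to arbitrary $m,n$.

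First apply Theorem~\ref{th:vajda} with $p=1$. The sum on the right collapses to the single term $F^{(k)}_{n+q,\,n}$, giving
\[
F^{(k)}_{n}F^{(k)}_{n+q+1}-F^{(k)}_{n+1}F^{(k)}_{n+q} = F^{(k)}_{n+q,\,n}
\]
for all integers $n$ and $q$. Now put $m=n+q$; since $q$ is an arbitrary integer, $m$ is an arbitrary integer. The left-hand side becomes exactly $F^{(k)}_{n}F^{(k)}_{m+1}-F^{(k)}_{n+1}F^{(k)}_{m}$ and the right-hand side is $F^{(k)}_{m,n}$. This is \eqref{gen.vajda1}.

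The only thing to check is that Theorem~\ref{th:vajda} places no restriction on $q$ beyond $p\ge 0$. It does not: Theorem~\ref{lemma:second_} is stated over a field for all integers $n,p,q$ with $p\ge0$. The $\phi_i$, being roots of $x^k-\cdots-1$, are nonzero, so negative exponents make sense. They are also distinct, so dividing by $V^{(k)}$ is legitimate. No case split is needed.

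As an independent check, in case one prefers not to lean on the general theorem, there is a direct argument. The right-hand side of \eqref{gen.vajda1} satisfies the $k$-bonacci recursion separately in $m$ and in $n$, as does $F^{(k)}_{m,n}$ by Lemma~\ref{co:orthogonal}. So it suffices to verify equality on a $k\times k$ block of initial values, say $-1\le m\le k-2$ and $n$ in any $k$ consecutive integers.

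To do this, use the values $F^{(k)}_{-1}=1$, $F^{(k)}_0=\cdots=F^{(k)}_{k-3}=0$ and $F^{(k)}_{k-2}=1$, which follow from \eqref{eq.binet} together with the recursion. Then for $0\le m\le k-4$ both factors $F^{(k)}_m, F^{(k)}_{m+1}$ vanish, so the right-hand side is $0$. For $m=k-3$ and $m=k-2$ the right-hand side reduces to $F^{(k)}_n$ and $F^{(k)}_n-F^{(k)}_{n+1}$ respectively. (At $m=k-2$ this uses $F^{(k)}_{k-1}=1$, which also holds.)

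For $m=-1$, using $F^{(k)}_{0}=0$, the right-hand side is $-F^{(k)}_{n+1}$. These match Lemma~\ref{lemma.values}, and the main obstacle is only the index bookkeeping at the ends of the block. One point needs care here: Lemma~\ref{lemma.values} attaches $F^{(k)}_n$ to $m=k-2$, whereas this computation gives it at $m=k-3$. Because of that apparent mismatch, the primary proof should be the Vajda specialization above, which needs no such table.
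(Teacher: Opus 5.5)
Your main argument (specialize Theorem~\ref{th:vajda} to $p=1$, $q=m-n$, so that the sum collapses to the single term $F^{(k)}_{m,n}$) is exactly the paper's proof, and your remark that $q$ is unrestricted because the roots are nonzero and distinct is correct. Your side check, however, is off by one. By \eqref{eq.binet}, $F^{(k)}_0=\cdots=F^{(k)}_{k-2}=0$ and $F^{(k)}_{k-1}=1$; your list would in fact force $F^{(k)}_{k-1}=2$. With the correct values, the right-hand side of \eqref{gen.vajda1} gives $-F^{(k)}_{n+1}$, $0$, $F^{(k)}_n$, $F^{(k)}_n-F^{(k)}_{n+1}$ for $m=-1$, $0\le m\le k-3$, $m=k-2$, $m=k-1$ respectively, in full agreement with Lemma~\ref{lemma.values}. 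So the ``mismatch'' you flag comes from your own indexing, not from anything wrong in the paper.
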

\begin{proof}
Apply \eqref{gen.vajda} with $p=1$ and $q= m-n$.
\end{proof}

\begin{theorem}\label{thm.reduce.to.k}
For integers $m$ and $n$, it holds that
\begin{equation} \label{eq:k}
F^{(k)}_{m,n}  
=   F^{(k)}_{n-k}F^{(k)}_{m} -F^{(k)}_{n}F^{(k)}_{m-k} \,.
\end{equation}
\end{theorem}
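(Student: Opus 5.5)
Starting from Theorem~\ref{th:vajda1}, we have
\[
F^{(k)}_{m,n} = F^{(k)}_{n}F^{(k)}_{m+1} - F^{(k)}_{n+1}F^{(k)}_{m}.
\]
The aim is to eliminate the index shifts $m+1$ and $n+1$ in favour of $m-k$ and $n-k$. The tool for this is \eqref{further.calc}, which gives $F^{(k)}_{j+1} = 2F^{(k)}_{j} - F^{(k)}_{j-k}$ for every integer $j$.

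Substitute this with $j=m$ and with $j=n$:
\[
F^{(k)}_{n}\bigl(2F^{(k)}_{m} - F^{(k)}_{m-k}\bigr) - \bigl(2F^{(k)}_{n} - F^{(k)}_{n-k}\bigr)F^{(k)}_{m}.
\]
The two terms $2F^{(k)}_nF^{(k)}_m$ cancel, and what remains is
\[
F^{(k)}_{n-k}F^{(k)}_{m} - F^{(k)}_{n}F^{(k)}_{m-k},
\]
which is exactly \eqref{eq:k}.

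The argument needs only Theorem~\ref{th:vajda1} and \eqref{further.calc}, both valid for all integers. So there are no case distinctions, and the argument holds for every $k\ge 2$.

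The only point to check is that \eqref{further.calc} holds for negative indices as well. This is immediate, because its proof uses only the $k$-bonacci recursion, which $F^{(k)}_n$ satisfies for all $n\in\mathbb{Z}$ (Lemma~\ref{co:orthogonal} with $\ell=1$).

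For $k=2$, a quick sanity check would compare the result with Lemma~\ref{eval.two}.
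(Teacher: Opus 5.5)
Your proof is correct and follows the paper's argument: the paper also combines Theorem~\ref{th:vajda1} with \eqref{further.calc}, applied at both $m$ and $n$. The only difference is presentation. The paper multiplies the two instances of \eqref{further.calc} to get $F^{(k)}_{n}F^{(k)}_{m+1}-F^{(k)}_{n+1}F^{(k)}_{m}=F^{(k)}_{n-k}F^{(k)}_{m}-F^{(k)}_{n}F^{(k)}_{m-k}$ before invoking Theorem~\ref{th:vajda1}, whereas you substitute directly.
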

\begin{proof} We have 
\[
2F^{(k)}_{n} = F^{(k)}_{n+1}+F^{(k)}_{n-k}
\]
by \eqref{further.calc}.
Likewise, we have
\begin{equation}\label{two.tricks}
{\textstyle\frac12}\left(F^{(k)}_{m+1}+F^{(k)}_{m-k}\right) 
= F^{(k)}_{m}
\,.
\end{equation}
Multiplying \eqref{further.calc} and \eqref{two.tricks}, we obtain
\[
F^{(k)}_{n} \left( F^{(k)}_{m+1}+F^{(k)}_{m-k}\right) = F^{(k)}_{m} 
\left( F^{(k)}_{n+1}+F^{(k)}_{n-k}\right)
\,,
\]
which is equivalent to 
\[
F^{(k)}_{n}F^{(k)}_{m+1}-F^{(k)}_{n+1}F^{(k)}_{m} 
= F^{(k)}_{n-k}F^{(k)}_{m}
-F^{(k)}_{n}F^{(k)}_{m-k}
\,,
\]
and by Theorem~\ref{th:vajda1}, the left-hand side of this
last equation equals $ F^{(k)}_{m,n} $.
\end{proof}

The result in the following theorem is surprising. 
Essentially, it says that the sum of $k$ diagonal 
elements, as in
Figure~\ref{fig:visual3}, will appear beyond the lower 
right corner of the smallest square containing the diagonal.

\begin{theorem}
\label{lemma.surprise2}
For $k \geq 2$ and  $m,n\in \mathbb{Z}$,  it holds that
\begin{equation} 
F^{(k)}_{m,\,n} = \sum_{i=0}^{k-1}F^{(k)}_{m-k+i,\,n-1-i} 
\,. 
\end{equation}
\end{theorem}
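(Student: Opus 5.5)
The plan is to read the identity as the case $p=k$ of the generalized Vajda identity, Theorem~\ref{th:vajda}, and then identify its left-hand side using Theorem~\ref{thm.reduce.to.k}. No new determinant manipulation is needed; the work is choosing the parameters so the diagonal sum lines up.

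First, in \eqref{gen.vajda} I take $p=k\geq 0$ and choose $n$ and $q$ so that the summands become $F^{(k)}_{m-k+i,\,n-1-i}$. Write the Vajda parameters as $n'$ and $q$, so the summands there are $F^{(k)}_{n'+q+i,\,n'+k-1-i}$. Matching the second index, $n'+k-1-i=n-1-i$, forces $n'=n-k$. Matching the first index, $n'+q+i=m-k+i$, then forces $q=m-n$. With $n'=n-k$, $p=k$ and $q=m-n$, the right-hand side of \eqref{gen.vajda} is exactly
\[
\sum_{i=0}^{k-1}F^{(k)}_{m-k+i,\,n-1-i}.
\]

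Second, I compute the left-hand side of \eqref{gen.vajda} for these parameters. The four indices are $n'=n-k$, $n'+p+q=m$, $n'+p=n$ and $n'+q=m-k$. So the left-hand side is
\[
F^{(k)}_{n-k}F^{(k)}_{m}-F^{(k)}_{n}F^{(k)}_{m-k}.
\]

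Third, by Theorem~\ref{thm.reduce.to.k}, equation \eqref{eq:k}, this expression equals $F^{(k)}_{m,n}$, which proves the statement for all integers $m,n$. Since Theorem~\ref{th:vajda} only requires $p\geq 0$ and allows arbitrary integers $n$ and $q$, no case distinction is needed.

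The only real obstacle is the index bookkeeping, that is, checking that all four subscripts on the left match the four in \eqref{eq:k}. Once that is done, the result follows from the two earlier theorems.
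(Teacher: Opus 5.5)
Your proposal is correct and takes the same approach as the paper. The paper's proof also substitutes $n\mapsto n-k$, $p=k$, $q=m-n$ into \eqref{gen.vajda}, and then identifies the left-hand side $F^{(k)}_{n-k}F^{(k)}_{m}-F^{(k)}_{n}F^{(k)}_{m-k}$ with $F^{(k)}_{m,n}$ via Theorem~\ref{thm.reduce.to.k}; your index bookkeeping checks out.
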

\begin{proof}
We start by replacing $n$ by $n-k$ in  Vajda's identity for the $k$-bonacci
numbers, \eqref{gen.vajda}, to obtain
\[
F^{(k)}_{n-k}F^{(k)}_{n-k+p+q}-F^{(k)}_{n-k+p}F^{(k)}_{n-k+q} = 
\sum_{i=0}^{p-1}F^{(k)}_{n-k+q+i,\, n-k+p-1-i}
\,.
\]
Then we make the choices $p=k$ and $q=m-n$ to obtain
\[
F^{(k)}_{n-k}F^{(k)}_{m}-F^{(k)}_{n}F^{(k)}_{m-k} = 
\sum_{i=0}^{p-1}F^{(k)}_{m-k+i,\, n-1-i}
\,.
\]
By Theorem~\ref{thm.reduce.to.k}, the left-hand side of the last 
equation is equal to $F^{(k)}_{m,\,n}$, completing the proof.
\end{proof}

\begin{figure}[ht]
\centering
\begin{center}
\[
\begin{NiceArray}[columns-width=auto,hvlines,first-row,first-col]{cc||ccc||c}
 &-3 & & n-k &&& n\\
k-2&                   &               & F^{(k)}_{k-2,n-k}     &               &               & F^{(k)}_{k-2,n}\\ 
&                   &               &                       & \vdots        &               &\\
\Hline 
\Hline 
m-k& F^{(k)}_{m-k,-3}   & \cdots        &                       &               & a_{0}         &\\ 
&                   &               &                       & \iddots       &               &\\ 
&                   &               & a_{k-1}               &               &               &\\ 
\Hline 
\Hline
m& F^{(k)}_{m,-3} &               &                       &               &               & \sum_{i=0}^{k-1}a_i\\ 
\end{NiceArray}
\]
\[
a_i=F^{(k)}_{m-k+i,\, n-1-i}
\]
\caption{$k$-diagonal visualization}\label{fig:visual3}
\end{center}
\end{figure}

%
%
\section{Applications}
We note that   our method and results also apply  more generally
to a constant coefficient homogeneous linear recursion, $G_n$,
provided that the characteristic polynomial of the recursion has simple roots
and that the initial values are
\[
G_n = \begin{cases}
    0 & \text{if\ \ } 0\leq n < k-1, \\
    1  & \text{if\ \ } n = k-1.
\end{cases}
\]
Following \eqref{eq:Binet}, for $\ell\leq k$, we can use the  roots $\phi_1, \phi_2,\dots,\phi_k$ to define
\[
G_{c_1,\ \dots \ c_\ell} = \left. 
\frac{V^{(k)}_{c_1,\ \dots \ c_\ell}}{V^{(k)}} \right|_{\phi_1,\dots,\phi_k}
\]

A large class of such recursions is provided by the Lucas Sequences 
of the first kind $U_{n}(P,Q)$, given by
\[
U_{n}(P,Q) = \begin{cases}
    0 & \text{if\ \ } n=0, \\
    1  & \text{if\ \ } n = 1, \\
    P\,U_{n-1}(P,Q)-Q\, U_{n-2}(P,Q)& \text{if\ \ } n > 1,
\end{cases}
\]
where $P$ and $Q$ are integers. 
The characteristic polynomial of the recursion is 
$x^2-Px + Q$, so
\begin{equation}\label{non0disc}
P^2- 4Q\neq 0
\end{equation}
guarantees that the roots are simple.
Since  $k=2$, we have only 
$U_{n}(P,Q)$ and $U_{a,b}(P,Q)$ to consider.

Applying Theorem~\ref{lemma:second_} as we did before,
with $p=1$ and $q=-1$,
we obtain
\[
U_{n}(P,Q)^{2}-U_{n-1}(P,Q)\, U_{n+1}(P,Q) = U_{n-1,n}(P,Q)
\,.
\]
Also, we have
\[
    V^{(2)}\,U_{n-1,n}(P,Q) = \|\ n-1,\ n\ \| 
    =  \|\ 0,\ 1\ \| \cdot 
    \Big(\,{\textstyle\prod \phi_i}\,\Big)^{n-1}
= V^{(2)}\, Q^{n-1} \,,
\]
since the product of the roots is $Q$. Thus we have
the following result:
\begin{proposition}[Cassini's Formula 
for Lucas Sequences of the First Kind] 
If \eqref{non0disc} holds, then
\begin{equation}\label{eq:lucasfirst}
    U_{n}(P,Q)^2-U_{n-1}(P,Q)\ U_{n+1}(P,Q)=Q^{n-1}
    \,.
\end{equation}
\end{proposition}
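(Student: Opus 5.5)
The plan is to specialize Theorem~\ref{lemma:second_} to $k=2$, with $\phi_1,\phi_2$ the two roots of $x^2-Px+Q$. By \eqref{non0disc} these roots are distinct, so $V^{(2)}=\phi_2-\phi_1\neq 0$ and we may divide by it. The field can be taken to be $\mathbb{C}$ (or the splitting field of $x^2-Px+Q$ over $\mathbb{Q}$).

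\emph{Step 1: Binet-type representation.} Check that $U_n(P,Q)=V^{(2)}_{n}/V^{(2)}$, where $V^{(2)}_n=\|\ 0,\ n\ \|=\phi_2^n-\phi_1^n$. This quotient is the usual Binet expression $(\phi_2^n-\phi_1^n)/(\phi_2-\phi_1)$. It equals $0$ at $n=0$ and $1$ at $n=1$. It satisfies $U_n=PU_{n-1}-QU_{n-2}$ because each $\phi_i^n$ does; this is the same multilinearity argument as in Lemma~\ref{co:orthogonal}. Hence it agrees with $U_n(P,Q)$ for every $n$, including negative $n$ if we define $U_n$ there by running the recursion backwards. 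Running backwards is legitimate when $Q\ne0$. If $Q=0$, we restrict to $n\ge1$, which makes every exponent nonnegative, and the argument stays valid by part~(1) of the Remark after Theorem~\ref{lemma:second_}.

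\emph{Step 2: apply the main identity.} Take $p=1$ and $q=-1$ in \eqref{eq.new} and divide by $(V^{(2)})^2$. This gives
\[
U_n(P,Q)^2-U_{n-1}(P,Q)\,U_{n+1}(P,Q)=U_{n-1,n}(P,Q),
\]
where $U_{n-1,n}=\|\ n-1,\ n\ \|/V^{(2)}$.

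\emph{Step 3: evaluate the right-hand side.} Factor $\phi_i^{\,n-1}$ out of row $i$ of the determinant $\|\ n-1,\ n\ \|$, exactly as in Lemma~\ref{eval.two}. This gives $\|\ n-1,\ n\ \|=(\phi_1\phi_2)^{n-1}\|\ 0,\ 1\ \|=Q^{n-1}V^{(2)}$, since the product of the roots of $x^2-Px+Q$ is $Q$. Dividing by $V^{(2)}$ yields \eqref{eq:lucasfirst}.

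The only delicate point is Step~1, matching the determinant quotient to $U_n(P,Q)$ and keeping track of the range of $n$ when $Q=0$ (equivalently, when a root is $0$). The rest is a direct substitution into results already proved.
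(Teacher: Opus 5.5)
Your proposal is correct and follows the paper's proof essentially verbatim. You specialize Theorem~\ref{lemma:second_} to $k=2$ with $p=1$, $q=-1$ to get $U_n^2-U_{n-1}U_{n+1}=U_{n-1,n}$, then factor $(\phi_1\phi_2)^{n-1}=Q^{n-1}$ out of $\|\ n-1,\ n\ \|$; your Step~1 just makes explicit the Binet identification $U_n=V^{(2)}_n/V^{(2)}$ that the paper leaves implicit.

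One small correction for the $Q=0$ case: part~(1) of the Remark does not literally apply, since it assumes $q\ge 0$ and here $q=-1$. The right justification is that for $p=1$, $q=-1$, $n\ge 1$, every exponent occurring in the proof of Theorem~\ref{lemma:second_} is one of $0,1,n-1,n,n+1$, hence nonnegative, so the argument runs unchanged with a zero root.
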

We have seen this fact mentioned in the 
literature for sequences having various common names 
that are examples of  
Lucas Sequences of the first kind. Indeed,  the Fibonacci numbers
themselves satisfy $F_n= U_n(1,-1)$, so \eqref{eq:lucasfirst}
is equivalent to  \eqref{eq:cassini}. The following are some
examples:
\begin{enumerate}
    \item In \cite{koken2008jacobsthal}, it was shown that the 
    Jacobsthal numbers, $J_n= U_n(1, -2)$, satisfy 
\[
J_n^2-J_{n-1}J_{n+1}=(-2)^{n-1}.
\] 
    \item In \cite{horadam1971pell}, it was shown that the 
    Pell numbers, $P_n= U_n(2, -1)$, satisfy 
\[
P_n^2-P_{n-1}P_{n+1}=(-1)^{n-1}.
\] 
    \item In \cite{ochalik2018generalized}, it was shown that the 
    Mersenne numbers, $M_n= U_n(3, 2)$, satisfy 
\[
M_n^2-M_{n-1}M_{n+1}=2^{n-1}.
\] 
    \item In \cite{ray2014generalization}\footnote{in 
the context of \it{Balancing Numbers}, which are $U_n(6\lambda, 1)$, 
for $\lambda \geq 1$},  it was shown that the square roots of
the square 
triangular numbers \cite[A001109]{oeis}, $S_n= U_n(6, 1)$, satisfy 
\[
S_n^2-S_{n-1}S_{n+1}=1.
\] 
\end{enumerate}

\begin{remark}\rm
In \cite{jeffery2014divisibility} (in the paragraph before
Lemma~14), the authors obtain a Cassini result  for the 
Lucas Sequences of the second kind. For Lucas Sequences of the first 
kind, $U_n = U_n(P,Q)$, they give the  matrix identity 
\begin{equation}\label{eq:matrix}
\begin{pmatrix}
    U_{n+1} & -QU_n \\
    U_n & -QU_{n-1} 
\end{pmatrix}=
\begin{pmatrix}
P & -Q \\
1 & 0
\end{pmatrix}^{n}
\,,
\end{equation}
citing \cite{brenner1951lucas} (though the notation 
in \cite{brenner1951lucas} is different). 
Taking the determinant of both sides of
\eqref{eq:matrix}, one obtains \eqref{eq:lucasfirst}---though this 
was left unstated in both 
references.

Taking $P=1$ and $Q=-1$, leads us to 
what is known as the 
Fibonacci  $\mathcal{Q}$-matrix (see \cite{Gould01081981}).
This  gives  to another 
way to generalize Cassini's identity.
The $\mathcal{Q}$-matrix is given by  
\[
\mathcal{Q}=
\begin{pmatrix}
    1 & 1 \\
    1 & 0
\end{pmatrix}
=
\begin{pmatrix}
    F_2 & F_1 \\
    F_1 & F_0
\end{pmatrix}.
\]
By induction, we see that 
\begin{equation}\label{eq:q2}
\begin{pmatrix}
    F_{n+1} \\
    F_n
\end{pmatrix} = 
   \mathcal{Q}^n 
\begin{pmatrix}
    F_1 \\
    F_0
\end{pmatrix}
=
   \mathcal{Q}^n 
\begin{pmatrix}
    1 \\
    0
\end{pmatrix}
\hbox{\rm\quad  and\quad  }
    \mathcal{Q}^n=
\begin{pmatrix}
    F_{n+1} & F_{n} \\
    F_{n} & F_{n-1}
\end{pmatrix}.
\end{equation}
The eigenvalues of $\mathcal{Q}$ are the roots of the Fibonacci 
characteristic 
polynomial, which leads to an alternative proof of the Binet formula. 
In addition, taking determinants and noting that  
$\hbox{\rm det\,}[\mathcal{Q}]=-1$, we obtain
Cassini's identity. 

To extend to the $k$-bonacci numbers, we begin by defining
the $k\times k$ matrix $\mathcal{Q}^{(k)}$ by
\begin{equation}
\mathcal{Q}^{(k)}=
\begin{pmatrix}
    1 & 1 & \cdots & 1 & 1\\
    1 & 0 & \cdots & 0 & 0\\
    0 & 1 & \cdots & 0 & 0\\
    \vdots &&\ddots& &\vdots \\
    0 & &\cdots & 1 & 0\\
\end{pmatrix}
\end{equation}
By induction, we see that
\[
\begin{pmatrix}
F^{(k)}_{n+k-1} \\
F^{(k)}_{n+k-2} \\
\vdots \\
F^{(k)}_n
\end{pmatrix}
=\left(\mathcal{Q}^{(k)}\right)^n
\begin{pmatrix}
F^{(k)}_{k-1} \\
F^{(k)}_{k-2} \\
\vdots \\
F^{(k)}_0
\end{pmatrix}
=
\left(\mathcal{Q}^{(k)}\right)^n
\begin{pmatrix}
1 \\
0 \\
\vdots \\
0
\end{pmatrix}.
\]
By Laplace (i.e., co-factor) expansion along the last column,
we obtain
\begin{equation} \label{eq:det}
    \hbox{\rm det\,}\left[\mathcal{Q}^{(k)}\right]
    =(-1)^{k-1}  \hbox{\rm det\,}[ I ]=(-1)^{k-1}.    
\end{equation}

\noindent We can also calculate the eigenvalues of $\mathcal{Q}^{(k)}$. To do so, we will show that
$$\hbox{\rm det\,}\left[\mathcal{Q}^{(k)}-\lambda I\right]=(-1)^k(\lambda^k-\lambda^{k-1}-\cdots-1),$$ 
which suffices to show that the eigenvalues of $\mathcal{Q}^{(k)}$ are the roots 
of the $k$-bonacci characteristic polynomial. We first note by an 
elementary calculation that
\[
\hbox{\rm det\,}\left[\mathcal{Q}-\lambda I\right]
= \lambda^2-\lambda-1=(-1)^2(\lambda^2-\lambda-1).
\]
We can then calculate $\hbox{\rm det\,}\left[\mathcal{Q}^{(k)}-
\lambda I\right]$ by expansion along the 
last column, and find that
\begin{align*}
\hbox{\rm det\,}\left[\mathcal{Q}^{(k)}-\lambda I\right] 
&= (-1)^{k-1}+(-1)^{2(k-1)}(-\lambda)\, 
\hbox{\rm det\,}\left[\mathcal{Q}^{(k-1)}-\lambda I\right] \\
&= (-1)^{k} \left( \lambda (\lambda^{k-1}-\lambda^{k-2}\cdots-1)   
-1\right) \\
&= (-1)^{k}\left(\lambda^k-\lambda^{k-1}-\cdots-1\right).
\end{align*}

For all $k$, an identity similar to \eqref{eq:q2} can be achieved by 
linear algebra. 
In \cite{cerda2018three}, this idea is realized for $k=3$, as
\[
\left(\mathcal{Q}^{(3)}\right)^{n-1}=\begin{pmatrix}    
F^{(3)}_{n+1} & F^{(3)}_{n} + F^{(3)}_{n-1} & F^{(3)}_{n} \\
F^{(3)}_{n} & F^{(3)}_{n-1}+F^{(3)}_{n-2} & F^{(3)}_{n-1} \\
F^{(3)}_{n-1} & F^{(3)}_{n-2} + F^{(3)}_{n-3} & F^{(3)}_{n-2}
\end{pmatrix},
\] 
leading to the following Cassini-style identity 
for the Tribonacci numbers
\begin{equation}
F^{(3)}_{n-1} + (F^{(3)}_{n-2})^2F^{(3)}_{n+1}+F^{(3)}_{n-3}
(F^{(3)}_{n})^2-2F^{(3)}_{n-2}F^{(3)}_{n-1}F^{(3)}_{n}-
F^{(3)}_{n-3}F^{(3)}_{n-1}F^{(3)}_{n+1}=1.
\end{equation}
\end{remark}


\bibliographystyle{plain} 
\bibliography{kbonacci} 

\end{document}